\definecolor{mylinkcolor}{rgb}{0.5,0.0,0.0}
\definecolor{myurlcolor}{rgb}{0.0,0.0,0.75}
\declaretheorem{theorem}
\renewcommand{\O}{\mathcal{O}}
\newcommand{\Cartier}{\mathcal{C}}
\newcommand{\C}{\mathbf{C}}
\newcommand{\Q}{\mathbf{Q}}
\newcommand{\Z}{\mathbf{Z}}
\newcommand{\F}{\mathbf{F}}
\newcommand{\Fp}{\F_p}
\DeclareMathOperator{\M}{\mathsf{M}}
\newcommand{\tr}{\operatorname{tr}}
\newcommand{\lc}{\operatorname{lc}}
\newcommand{\rem}{\operatorname{rem}}
\renewcommand{\vec}[1]{{\boldsymbol{#1}}}
\renewcommand{\div}{\operatorname{div}}
\newcommand{\defi}{\textsf}
\newcommand{\disc}{\operatorname{disc}}
\newcommand{\mr}[1]{MathSciNet:\,\href{https://mathscinet.ams.org/mathscinet-getitem?mr=#1}{MR#1}}
\newcommand{\arxiv}[1]{arXiv:\,\href{https://arxiv.org/abs/#1}{#1}}
\newcommand{\halinria}[1]{HAL-Inria:\,\href{https://hal.inria.fr/inria-#1/}{#1}}
\newcommand{\hal}[1]{HAL:\,\href{https://hal.archives-ouvertes.fr/hal-#1/}{#1}}
\newtheorem{lemma}[theorem]{Lemma}
\newtheorem{corollary}[theorem]{Corollary}
\theoremstyle{definition}
\newtheorem{definition}[theorem]{Definition}
\newtheorem{example}[theorem]{Example}
\newtheorem{remark}[theorem]{Remark}
\title{Counting points on superelliptic curves in average polynomial time}
\author{Andrew V. Sutherland}
\thanks{The author was supported by Simons Foundation grant 550033}
\begin{document}

\begin{abstract}
We describe the practical implementation of an average polynomial-time algorithm for counting points on superelliptic curves defined over $\Q$ that is substantially faster than previous approaches.  Our algorithm takes as input a superelliptic curve $y^m=f(x)$ with $m\ge 2$ and $f\in \Z[x]$ any squarefree polynomial of degree $d\ge 3$, along with a positive integer $N$.  It can compute $\#X(\Fp)$ for all $p\le N$ not dividing $m\lc(f)\disc(f)$ in time $O(md^3 N\log^3 N\log\log N)$.  It achieves this by computing the trace of the Cartier--Manin matrix of reductions of $X$.  We can also compute the Cartier--Manin matrix itself, which determines the $p$-rank of the Jacobian of $X$ and the numerator of its zeta function modulo~$p$.
\end{abstract}

\maketitle
{\centering\small\textit{In memory of \href{https://en.wikipedia.org/wiki/Peter_Montgomery_(mathematician)}{Peter L. Montgomery}.}\par}

\section{Introduction}
Let $X/k$ by a smooth projective curve of genus $g>0$ whose function field is defined by an equation of the form
\[
y^m = f(x),
\]
with $m>1$ prime to the characteristic $p$ of $k$ and $f\in k[x]$ a squarefree polynomial of degree $d\ge 3$.
We shall call such a curve $X$ a \defi{superelliptic curve}.  We note that not all authors require $f$ to be squarefree or $p\nmid m$, while others require $d$ and $m$ to be coprime; our definition follows the convention in \cite{MN17,Z18} and is equivalent to the class of cyclic covers of $\mathbf{P}^1$ considered in \cite{ABCMT19,Gon15}.
One can compute the genus of $X$ as
\begin{equation}\label{eq:g}
g = \frac{(d-2)(m-1) + m-\gcd(m,d)}{2},
\end{equation}
via the Riemann-Hurwitz formula.
Well known examples of superelliptic curves include elliptic curves, hyperelliptic curves, Picard curves, and Fermat curves.

We are primarily interested in $k=\Q$ where $X$ has an associated $L$-function $L(X,s)=\sum a_nn^{-s}$ that we would like to ``compute''.  For us this means computing the integers $a_n$ for all $n$ up to a bound $N$ that is large enough for us to approximate special values of $L(X,s)$ to high precision, and to compute upper bounds on its analytic rank that we can reasonably expect to be sharp.  This requires $N$ to be on the order of the square root of the conductor of the Jacobian of $X$, and in practice we typically take $N$ to be about 30 times this value.

The fact that $L(X,s)$ is defined by an Euler product implies that it suffices to compute $a_n$ for prime powers $n\le N$.  Nearly all of the prime powers $n\le N$ are in fact primes $p$, so this task is overwhelmingly dominated by the time to compute $a_p$ for primes $p\le N$.  Indeed, if we spend $O(p^{e-1}\log^2 p)$ time computing each $a_{p^e}\le N$ with $e > 1$, we will have spent only $O(N\log N)$ time, which is roughly the time it takes just to write down the $a_n$ for $n\le N$.  For primes of good reduction for $X$, which includes all $p\nmid m\lc(f)\disc(f)$,\footnote{When $m$ divides $d$ there may be good primes that divide $\lc(f)$, but to simplify the presentation we shall exclude them.} we can compute $a_p$ as
\[
a_p = p+1-\#X(\Fp),
\]
in other words, by counting points on the reduction of $X$ modulo $p$.  See \cite{BW17} for a discussion of how primes of bad reduction may be treated.  Alternatively, if one is willing to assume that the Hasse-Weil conjecture for $L(X,s)$ holds, one can use the knowledge of $a_n$ at powers of good primes to determine the $a_n$ at powers of bad primes (and in particular, the primes $p|m$ not treated by \cite{BW17}) by using the functional equation to rule out all but one possibility; see \cite[\S 5]{BSSVY16} for a discussion of this approach when $g=2$.  

Another motivation for computing $a_p$ for good primes $p\le N$ is to compute the sequence of normalized Frobenius traces $a_p/\sqrt{p}$ that appear in generalizations of the  Sato-Tate conjecture.  The moments of this distribution  encode certain arithmetic invariants of $X$, including, for example, the rank of the endomorphism ring of its Jacobian \cite[Prop,\,1]{CFS19}, as well as information about its Sato-Tate group \cite{Ser12,FKRS12}.  Indeed, the initial motivation for this work (and its first application) was to compute Sato-Tate distributions for the three types of genus 3 superelliptic curves with $(m,d)\in \{(3,4),(4,3),(4,4)\}$ that arise as smooth plane quartics in the database described in \cite{Sut18}, which played a role in the recent classification of Sato--Tate groups of abelian threefolds \cite{FKS19}. The sequence of normalized Frobenius traces can also be used to numerically investigate the  error term in the Sato-Tate conjecture, and in particular, predictions regarding its leading constant \cite{BKF20}. The ability to efficiently compute many integer values of $a_p$ also supports investigations of generalizations of the Lang-Trotter conjecture, as well as a recent question of Serre regarding the density of ``record'' primes, those for which $-a_p > 2g\sqrt{p}-1$ \cite{Ser19}.

The algorithm we present here actually does more than just compute $a_p$.  Following the approach of \cite{HMS16, HS14, HS16}, which treated the case of hyperelliptic curves, for each good prime $p$ we compute a $g\times g$ matrix $A_p$ giving the action of the Cartier--Manin operator on a basis for the space of regular differentials of the reduction of $X$ modulo~$p$; see~\S\ref{sec:cartier} for details.  This matrix $A_p$ is the transpose of the Hasse--Witt matrix, and like the Hasse--Witt matrix it satisfies the identity
\[
\det (I-TA_p)\equiv L_p(T)\bmod p,
\]
where $L_p(T)$ is the integer polynomial that appears in both the Euler product $L(X,s)=\prod_p L_p(p^{-s})^{-1}$ and the numerator of the zeta function of the reduction of $X$ modulo $p$:
\[
Z_p(T) := \exp\left(\sum_{n\ge 1} \#X(\F_{p^n})\frac{T^n}{n}\right) =\frac{L_p(T)}{(1-T)(1-pT)}.
\]
In particular, we have $a_p\equiv \tr A_p\bmod p$, and for $p>16g^2$ this uniquely determines $a_p\in \Z$, since $|a_p|\le 2g\sqrt{p}$, by the Weil bounds.  The matrix $A_p$ is also of independent interest, since it can be used to compute the $p$-rank of the reduction of $X$ modulo $p$, something that cannot be deduced solely from $L_p(T)\bmod p$.

Our main result is the following theorem, in which $\|f\|\coloneqq \log\max_i |f_i|$ denotes the logarithmic height of a nonzero integer polynomial $f(x)=\sum_i f_ix^i$.

\begin{restatable}{theorem}{firstavgpoly}\label{thm:avgpoly}
Given a superelliptic curve $X\colon y^m=f(x)$ with $f\in \Z[x]$ of degree~$d$ and $N\in \Z_{> 0}$, the algorithm \textsc{ComputeCartierManinMatrices} outputs the Cartier--Manin matrices $A_p$ of the reductions of $X$ modulo all primes $p\le N$ not dividing $m\lc(f)\disc(f)$.  If we assume $m$, $d$, $\|f\|$ are bounded by $O(\log N)$ the algorithm runs in $O(m^2d^3N \log^3\!N)$ time using $O(md^2N)$ space;
it can alternatively compute Frobenius traces $a_p\in \Z$ for $p\le N$ in time $O(md^3N \log^3\!N)$.
\end{restatable}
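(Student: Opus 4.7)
The plan is to generalize the accumulating remainder tree framework of Harvey and collaborators \cite{HS14,HS16,HMS16}, which treats hyperelliptic curves ($m=2$), to arbitrary $m\ge 2$. The first step expresses each $A_p$ as a modular reduction of a product of small integer matrices. Working in the basis of regular differentials $\omega_{i,j}=x^i y^{-j}\,dx$ indexed by $1\le j\le m-1$ and $0\le i<\lfloor jd/m\rfloor$ (with the natural adjustment when $m\mid jd$), standard descriptions of the Cartier operator (see, e.g., \cite{Gon15}) identify each block entry of $A_p$ with a specific coefficient of $f(x)^{k_j(p)}$, where $k_j(p)=\lfloor j(p-1)/m\rfloor$. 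The differential identity $(f^k)'\,f=k\,(f^k)\,f'$ then yields a holonomic recurrence relating the relevant coefficient vector of $f^{k+1}$ to that of $f^k$ via multiplication by a $d\times d$ integer matrix $M(k)$ whose entries are polynomial in $k$ with small coefficients derived from $f$. Each of the $m-1$ diagonal blocks of $A_p$ indexed by $j$ thus equals $\prod_k M(k)\pmod p$ over a window of $k$-values of length $\approx j(p-1)/m$, applied to an explicit starting vector.

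The second step would apply the accumulating remainder tree of \cite{HS16,HMS16} once per $(j, p\bmod m)$ pair, giving $O(m^2)$ instances. Each instance builds a balanced product tree of the relevant $M(k)$ over $\Z$ bottom-up, then makes a top-down pass reducing modulo the product of the primes assigned to that instance. Since the per-level total bit-size stays $O(N\log N)$ (using FFT integer multiplication), the tree has $O(\log N)$ levels, and each of the $O(N)$ matrix multiplications encountered costs $O(d^3)$ scalar operations, the total time per instance is $O(d^3 N\log^3 N)$. Summing over the $O(m^2)$ instances gives $O(m^2 d^3 N\log^3 N)$; the $O(md^2 N)$ space bound follows from the observation that only two adjacent tree levels need to be kept resident and each matrix entry contributes $O(\log N)$ bits on average. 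For the trace variant only $\tr A_p\bmod p$ is sought, and since each block contributes to the trace through a single scalar functional, the accumulated $d\times d$ matrix can be replaced by a compatible pair of row/column vectors propagated through the tree; this reduces per-step matrix-matrix multiplication to matrix-vector multiplication, saving a factor of $d$ per instance and ultimately producing $O(md^3 N\log^3 N)$.

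The main obstacle I anticipate is the bookkeeping of the $O(m^2)$ sub-problems, which collapse into a single recurrence when $m=2$ but for general $m$ must each be individually scheduled so their aggregate cost matches the stated bound. In particular, one must verify that the $k$-windows of differing length across $j$ and across residue classes of $p$ combine without additional $\log$ or $m$ factors, and that the transition matrices $M(k)$ used for different $j$ really can be shared (or cheaply derived from a common master product). Secondary care is required for edge cases: when $\gcd(m,d)>1$ the differential basis has a slightly altered shape, and primes dividing $m\lc(f)\disc(f)$ must be identified and excluded from the forest — but both issues touch only a negligible share of the total work.
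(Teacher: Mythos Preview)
Your outline captures the high-level structure correctly, but it has a genuine gap at the heart of the matrix computation. The recurrence you derive from $(f^k)'f = k\,f^k f'$ (which is essentially the one in \eqref{eq:fnk}--\eqref{eq:Mlk}) moves along the \emph{coefficient index} of a fixed power $f^{n_j}$: starting from $v_0$ and multiplying by $M_0^\ell, M_1^\ell,\ldots$ yields the coefficients $f^{n_j}_0,f^{n_j}_1,\ldots$ in order, with a denominator $s!$ after $s$ steps. This lets you compute the \emph{first} row of each block $B^{j\ell}$, whose entries are $f^{n_j}_{p-k}$ for $1\le k\le d_\ell$. But the $i$th row of $B^{j\ell}$ needs $f^{n_j}_{ip-k}$, and to reach index $ip-1$ with $i\ge 2$ the accumulated denominator $s!$ picks up a factor of $p$; the recurrence then fails modulo $p$. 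So a single matrix product applied to a starting vector does \emph{not} give you the whole block, and the $O(m^2)$ remainder-tree instances you describe compute only one row of each block.

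The paper resolves this with a translation trick you do not mention (Lemma~\ref{lem:translation} and \eqref{eq:B1toB}): for $d_j$ distinct shifts $a_1,\ldots,a_{d_j}$ one computes the first row of the block $B^{j\ell}(a_i)$ of the isomorphic curve $y^m=f(x+a_i)$, and the relation $B^{j\ell}(a)T^\ell(a)=T^j(a)B^{j\ell}$ turns these $d_j$ first rows into a Vandermonde system whose solution is $B^{j\ell}$. This is why the inner loop runs $O(m^2 d)$ remainder-forest instances (one per triple $(j,\ell,a_i)$), each of cost $O(d^2N\log^3 N)$, giving the stated $O(m^2d^3N\log^3 N)$. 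Your account of the trace speedup is also off: the paper's factor-of-$m$ saving comes from restricting to the diagonal blocks $j=\ell$ (Remark~\ref{rem:traceonly}), not from replacing matrix--matrix by matrix--vector multiplication; the remainder-forest computation in step~iv is already a vector-times-matrix product.
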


\begin{remark}
The assumption $m,d,\|f\|=O(\log N)$ ensures that the complexity of multiplying the integer matrices used in the algorithm is dominated by the cost of computing FFT transforms of the matrix entries, which eliminates any dependence on the exponent $\omega$ of matrix multiplication; one can replace $d^3$ with $d^{\omega+1}$ and then remove this assumption.  We note that our complexity bound relies on the recently improved $\M(n)=n\log n$ bound on integer multiplication \cite{HvdH19a}.  While the algorithm that achieves this bound is not practical, many FFT-based implementations effectively achieve this growth rate within the feasible range of computation, which for our purposes, is certainly limited to integers that fit in random access memory; see \cite[Alg.\,8.25]{GG13}, for example.
\end{remark}

We also obtain an algorithm that can be used to compute $A_p$ for a single superelliptic curve $X/\Fp$.  The asymptotic complexity is comparable to that achieved in \cite{ABCMT19} which describes the algorithm that is now implemented in version 9 of Sage \cite{sage}.  We include this result because it contains several components that are used by the average polynomial-time algorithm we present.  We should emphasize that the algorithm in \cite{ABCMT19} can compute $L_p(T)\bmod p^n$ for any $n\ge 1$, and taking $n$ sufficiently large yields $L_p\in \Z[T]$, whereas we focus solely on the case $n=1$ (we gain a small but not particularly significant performance advantage in this case).

\begin{restatable}{theorem}{firstmodp}\label{thm:modp}
Given a superelliptic curve $X\colon y^m=f(x)$ with $f\in \Fp[x]$ of degree $d$, the algorithm \textsc{ComputeCartierManinMatrix} can compute the Cartier--Manin matrix of $X$ in $O(md^3 p^{1/2}\log p (d^{\omega-2}\log\log p+\log p))$ time using $O(md^2p^{1/2}\!\log p)$ space, and also in $O(md^2(p+d)\log p\log\log p)$ time using $O((md+d^2)\log p)$ space.
\end{restatable}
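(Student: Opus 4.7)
The plan is to reduce the Cartier--Manin computation to extracting prescribed coefficients from powers of $f$, then apply two complementary algorithms: a baby-step giant-step (BGS) product of matrices for the first bound, and a direct linear iteration for the second.

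First I would invoke the classical expression for the Cartier operator on $y^m = f(x)$ (developed in~\S\ref{sec:cartier}): a basis of regular differentials is given by $x^i\,dx/y^j$ for $1\le j\le m-1$ and $0\le i < d_j$, with the $d_j$ determined by~\eqref{eq:g}, and the matrix entries of the Cartier operator on this basis are particular coefficients of $f(x)^{\lfloor jp/m\rfloor}\bmod p$. The task for each residue $j$ thus reduces to computing a window of $O(d)$ consecutive coefficients of $f(x)^{K_j}$ with $K_j = \lfloor jp/m\rfloor = O(p)$.

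Next I would recast this as evaluating a matrix product. Writing $v_k \in \Fp^{\,D}$ for the vector of $D = O(d)$ relevant coefficients of $f(x)^k$, the identity $f g_k' = k f' g_k$ with $g_k := f^k$ yields a first-order linear recurrence $v_{k+1} = M(k)\,v_k$, where $M(k)$ is a $D\times D$ matrix whose entries are polynomials in $k$ of degree $O(1)$. The target window is $\bigl(\prod_{k=0}^{K_j-1} M(k)\bigr) v_0$, to be computed for each $j = 1,\ldots,m-1$.

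For the first bound I apply BGS with step size $s = \lceil\sqrt{K_j}\,\rceil$: form the matrix polynomial $P(X) = \prod_{i=0}^{s-1} M(X+i)$, whose entries are polynomials in $X$ of degree $O(s)$, via a subproduct-tree multiplication at cost $O(d^\omega s \log^2 s)$ $\Fp$-operations; multi-point evaluate $P$ at $0, s, 2s, \ldots, (s-1)s$ in $O(d^2 s \log^2 s)$ operations; then chain the resulting $s$ scalar matrices in $O(s\,d^\omega)$ operations. Each $\Fp$-operation costs $\M(\log p) = O(\log p\log\log p)$ bit operations. Summing over the $m-1$ residues and separating the $d^\omega$ (matrix multiplication) and $d^2\log^2 s$ (multi-point evaluation) contributions gives the stated time $O\bigl(md^3 p^{1/2}\log p\,(d^{\omega-2}\log\log p + \log p)\bigr)$, with space $O(md^2 p^{1/2}\log p)$ dominated by storage of $P$.

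For the second bound I simply iterate $v_{k+1} = M(k)\,v_k$ for $k = 0,\ldots,K_j-1$: each step is a single $D\times D$ matrix-vector product in $O(d^2)$ $\Fp$-operations and needs only $O(d\log p)$ space for $v_k$. Including the one-time $O(d)$-sized setup of $v_0$ and the $O(d^2\log p)$ space for the entries of $M(k)$, and summing over the $m-1$ residues, this yields $O(md^2(p+d)\log p\log\log p)$ time and $O((md+d^2)\log p)$ space.

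The main obstacle will be pinning down the recurrence precisely: one must choose the coefficient window, verify that it has width $O(d)$ for every $j$, and confirm that $M(k)$ has entries that are low-degree polynomials in $k$, so that BGS really applies with the claimed parameters. Once this structural setup is in place, both complexity bounds follow from standard analyses of BGS and of direct linear iteration.
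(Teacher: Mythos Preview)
Your reduction to a linear recurrence with polynomial coefficients and the two evaluation strategies (Bostan--Gaudry--Schost and direct iteration) are the right ingredients, but there is a genuine gap in the structural setup. The claim that ``the task for each residue $j$ reduces to computing a window of $O(d)$ consecutive coefficients of $f(x)^{K_j}$'' is not correct: the block $B^{j\ell}$ of the Cartier--Manin matrix has $d_j$ rows, and its $(i,k)$-entry is the coefficient of $x^{ip-k}$ in $f^{n_j}$ (with exponent $n_j = p-1-\lfloor jp/m\rfloor$, not $\lfloor jp/m\rfloor$). Thus for each $j$ you need $d_j$ separate windows, located near $x^{ip}$ for $i=1,\ldots,d_j$, spread over a range of length $\Theta(dp)$. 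A single BGS run of length $O(p)$ yields only one such window --- the first row of $B^{j\ell}$ --- so your proposed $m-1$ runs produce only one row per block, not the full matrix. This is also why your BGS arithmetic does not actually reach the stated bound: $m$ runs cannot account for the factor $md^3$; the extra factor of $d$ comes precisely from the missing rows.

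The paper closes this gap with the translation trick of \S4: for each of $d_j$ distinct shifts $a_i$ it computes only the \emph{first} row of $B^{j\ell}(a_i)$ for the translated curve $y^m=f(x+a_i)$ (one recurrence run each), and then recovers all rows of $B^{j\ell}$ by solving a Vandermonde system via Lemma~\ref{lem:translation} and~\eqref{eq:B1toB}. This is what produces $O(md)$ recurrence evaluations, and multiplying the per-run BGS and iteration costs by $md$ (together with Lemma~\ref{lem:Bbound}) gives the two bounds in the theorem. A secondary issue: your description of the recurrence conflates two indices. The identity $f(f^n)'=nf'f^n$ gives a recurrence in the \emph{coefficient index} for a \emph{fixed} exponent $n$ (this is \eqref{eq:fnk}--\eqref{eq:Mlk}), not a map from coefficients of $f^k$ to coefficients of $f^{k+1}$; one cannot maintain a bounded-size window while stepping the exponent, since $\deg f^k=kd$ grows without bound.
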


In the  article \cite{ABCMT19} noted above the authors consider a particular curve
\[
X\colon y^7 = x^3 + 4x^2 + 3x - 1,
\]
for which they estimate that it would take approximately six months (on a single core) for their algorithm to compute the $L$-polynomials $L_p(T)$ for all primes $p\le 2^{24}$ of good reduction.  This is an improvement over an estimated three years for an earlier algorithm due to Minzlaff \cite{Minz10} that is implemented in Magma~\cite{magma}.  Computing $L_p(T)\bmod p$ is an easier problem that would likely take about a week or so using the algorithm in \cite{ABCMT19}, based on timings taken using a representative sample of $p\le 2^{24}$.  The algorithm we present here can accomplish this task in half an hour, and less than ten minutes if we only compute Frobenius traces.

See Tables~\ref{tab:perfhard} and~\ref{tab:perfeasy} in \S\ref{sec:perf} for detailed performance comparisons for various shapes of superelliptic curves.

\section{The Cartier operator}\label{sec:cartier}

For background on differentials of algebraic function fields we refer the reader to \cite[\S 2]{Ch51} and \cite[\S 4]{St09}.
Let $K$ be a function field of one variable over a perfect field $k$ of characteristic $p>0$ that we assume is the full field of constants of $K$.
Let $\Omega_K$ denote its module of differentials, which we identify with its module of Weil differentials via \cite[Def.\,4.17]{St09} and \cite[Rm.\,4.3.7]{St09}.
Let $x\in K$ be a separating element, so that $K/k(x)$ is a finite separable extension, and let $K^p$ denote the subfield of $p$th powers.  Then $(1,x,\ldots, x^{p-1})$ is a basis for $K$ as a $K^p$-vector space, and every $z\in K$ has a unique representation of the form
\[
z = z_0^p+z_1^px+\cdots + z_{p-1}^px^{p-1},
\]
with $z_0,\ldots,z_{p-1}\in K$, and every rational differential form $\omega = zdx$ can be uniquely written in the form
\[
\omega = (z_0^p+z_1^px+\cdots z_{p-1}^px^{p-1})dx.
\]
The (modified) \emph{Cartier operator} $\Cartier\colon \Omega_K\to\Omega_K$ is then defined by
\[
\Cartier(\omega) \coloneqq z_{p-1}dx.
\]
The Cartier operator is uniquely characterized by the following properties:
\begin{enumerate}
\item $\Cartier(\omega_1+\omega_2)=\Cartier(\omega_1)+\Cartier(\omega_2)$ for all $\omega_1,\omega_2\in\Omega_K$;
\item $\Cartier(z^p\omega)=z\,\Cartier(\omega)$ for all $z\in K$ and $\omega\in \Omega_K$;
\item $\Cartier(dz) = 0$ for all $z\in K$;
\item $\Cartier(dz/z)=dz/z$ for all $z\in K^\times$.
\end{enumerate}
In particular, it does not depend on our choice of a separating element $x$.  Moreover, it maps regular differentials to regular differentials and thus restricts to an operator on the space $\Omega_K(0)\coloneqq\{\omega\in\Omega_K:\omega=0\text{ or } \div(\omega)\ge 0\}$, which we recall is a $k$-vector space whose dimension $g$ is equal to (and often used as the definition of) the genus of $K$; see \cite[Ex.\,4.12-17]{St09} for these and other standard facts about the Cartier operator.

\begin{definition}
Let $\vec{\omega}\coloneqq (\omega_1,\ldots,\omega_g)$ be a basis for $\Omega_K(0)$ and define $a_{ij}\in k$ via
\[
\Cartier(\omega_j)=\sum_{i=1}^g a_{ij}\omega_i.
\]
The \defi{Cartier--Manin} matrix of $K$ (with respect to $\vec{\omega}$) is the matrix $A\coloneqq [a_{ij}]\in k^{g\times g}$.
\end{definition}

If $X/k$ is a smooth projective curve with function field $k(X)=K$, we also call $A$ the Cartier--Manin matrix of $X$.  This matrix is closely related to the \defi{Hasse-Witt} matrix $B$ of $X$, which is defined as the matrix of the $p$-power Frobenius operator acting on $H^1(X,\O_X)$ with respect to some basis.  As carefully explained in \cite{AH19}, the matrices $A$ and $B$ can be related via Serre duality, and for a suitable choice of basis one finds that $B = [a_{ij}^p]^{\mathsf{T}}$.
In the case of interest to us $k=\Fp$ is a prime field and the Cartier--Manin and Hasse--Witt matrices are simply transposes of eachother, hence have the same rank and characteristic polynomials, but we shall follow the warning/request of~\cite{AH19} and call $A$ the Cartier--Manin matrix, although one can find examples in the literature where $A$ is called the Hasse--Witt matrix (see \cite{AH19} for a list).

We shall apply the method of St\"ohr--Voloch \cite{SV87} to compute the Cartier--Manin matrix of a smooth projective curve $X$ with function field $K=k(X)$.
Let us write $K$ as $k(x)[y]/(F)$, where $x\in X$ is a separating element and $y$ is an integral generator for the finite separable extension $K/k(x)$ with minimal polynomial $F\in k[x][y]$.
We now define the differential operator
\[
\nabla \coloneqq \frac{\partial^{2p-2}}{\partial x^{p-1}\partial y^{p-1}},
\]
which maps $x^{(i+1)p-1}y^{(j+1)p-1}$ to $x^{ip}y^{jp}$ and annihilates monomials not of this form; it thus defines a semilinear map $\nabla\colon K\to K^p$.
Writing $F_y$ for $\frac{\partial}{\partial y} F\in k[x,y]$, for any $h\in K$ we have the identity
\begin{equation}\label{eq:SV1}
\Cartier\left(h\frac{dx}{F_y}\right) = \left(\nabla (F^{p-1}h)\right)^{1/p}\frac{dx}{F_y}
\end{equation}
given by \cite[Thm.\,1.1]{SV87}.
If we choose a basis for $\Omega_X(0)$ using regular differentials of the form $h dx/F_y$, we can compute the action of the Cartier operator on this basis via \eqref{eq:SV1}.  To construct such a basis we shall use differentials of the form
\begin{equation}\label{eq:omega}
\omega_{k\ell}\coloneqq x^{k-1}y^{\ell-1}\frac{dx}{F_y}\qquad (k,\ell\ge 1,\ \ k+\ell\le \deg(F)-1).
\end{equation}
Writing $F(x,y)^{p-1}=\sum_{i,j} F^{p-1}_{ij}x^iy^j$ (defining $F^{p-1}_{i,j}\in k$ for all $i,j\in\Z$), for $k,\ell\ge 1$ one finds that
\begin{equation}\label{eq:SV2}
\nabla\left(\sum_{i,j\ge 0}F^{p-1}_{ij}x^{i+k-1}y^{j+\ell-1}\right) = \sum_{i,j\ge 1} F^{p-1}_{ip-k,\,jp-\ell} x^{(i-1)p}y^{(j-1)p}.
\end{equation}
Now $F^{p-1}_{ip-k,\,jp-\ell}$ is nonzero only if we have $(i+j)p-(k+\ell)\le (p-1)\deg(F)$, and $k+\ell\le \deg(F)-1$, so we can restrict the sum on the RHS to $i+j\le \deg(F)-1$.
From \eqref{eq:SV1} and \eqref{eq:SV2} we obtain
\begin{equation}\label{eq:SV3}
\Cartier(\omega_{k\ell}) = \sum_{i,j\ge 1} \left(F_{ip-k,\,jp-\ell}^{p-1}\right)^{1/p}\omega_{ij}.
\end{equation}
When $X$ is a smooth plane curve the complete set of $\omega_{ij}$ defined in \eqref{eq:omega} is a basis for $\Omega_K(0)$ and we can read off the entries of the Cartier--Manin matrix for $X$ directly from \eqref{eq:SV3}.
In general not all of the $\omega_{ij}$ necessarily lie in $\Omega_K(0)$, some of them might not be regular, but the subset that do (those corresponding to \defi{adjoint polynomials}) form a basis for $\Omega_K(0)$; see \cite{Go52,SV87}.
In the case of superelliptic curves this subset is given explicitly by Lemma~\ref{lem:basis} below.

\begin{definition} For $a,b\in\Z$ with $b>0$ let $a\rem b\coloneqq a - b\lfloor a/b\rfloor$ denote the unique integer in $[0,b-1]\cap (a+b\Z)$.
\end{definition}

\begin{lemma}\label{lem:basis}
Let $k$ be a perfect field of positive characteristic $p$, let $X/k$ be a superelliptic curve defined by $F(x,y)\coloneqq y^m-f(x)=0$, let $d\coloneqq \deg f$, and for $i,j\ge 1$ let $\omega_{ij}\coloneqq x^{i-1}y^{j-1} dx/F_y\in \Omega_K$, where $K\coloneqq k(x)[y]/(F)$ is the function field of $X$.  Then the set
\[
\vec \omega \coloneqq \{ \omega_{ij}\colon mi + dj  < md \},
\]
is a $k$-basis for $\Omega_K(0)$, with $1\le i < d-\lfloor d/m\rfloor$ and $1\le j < m-\lfloor m/d\rfloor$.
Moreover, if we define
\begin{equation}\label{eq:dj}
d_j\coloneqq d - \lfloor dj/m\rfloor - 1\qquad\text{and}\qquad m_i\coloneqq m-\lfloor mi/d\rfloor -1,
\end{equation}
then the $\omega_{ij}\in\vec{\omega}$ are precisely those for which $1\le i\le d_j$ and $1\le j \le m_i$.
\end{lemma}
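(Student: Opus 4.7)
The plan is to determine, directly from the definition of $\Omega_K(0)$, which $\omega_{ij}$ are regular, then check that these are linearly independent and match the genus count in \eqref{eq:g}, and finally reconcile the two descriptions of $\vec\omega$ via floor-function bookkeeping. Since $F_y = m y^{m-1}$ (using $p\nmid m$), we have $\omega_{ij}=m^{-1}x^{i-1}y^{j-m}\,dx$, and I will show that $\omega_{ij}\in\Omega_K(0)$ iff $mi+dj<md$ by computing $v_P(\omega_{ij})$ at every place $P$.

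Passing to $\bar{k}$, three types of places need to be considered. At a ramification point $P_a$ above a root $a$ of $f$, $y$ is a uniformizer with $v_{P_a}(x-a)=m$, and the identity $dx/dy=my^{m-1}/f'(x)$ together with $f'(a)\ne 0$ (since $f$ is squarefree) yields $v_{P_a}(dx)=m-1$. One finds $v_{P_a}(\omega_{ij})=j-1$ if $a\ne 0$ and $v_{P_a}(\omega_{ij})=mi+j-m-1$ if $a=0$, both automatically $\ge 0$ for $i,j\ge 1$. At any place above $x=0$ with $f(0)\ne 0$, $v_P(\omega_{ij})=i-1\ge 0$, and at all other finite places the valuation is $0$. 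At a point $P_\infty$ at infinity, set $g\coloneqq\gcd(m,d)$ and $e_\infty=m/g$; then $v_{P_\infty}(x)=-m/g$, $v_{P_\infty}(y)=-d/g$, and $v_{P_\infty}(dx)=-(m/g+1)$, giving
\[
v_{P_\infty}(\omega_{ij})=\frac{md-mi-dj-g}{g}.
\]
This is $\ge 0$ iff $mi+dj\le md-g$, which, since both $mi+dj$ and $md$ are multiples of $g$, is equivalent to $mi+dj<md$. Hence the regular $\omega_{ij}$ are precisely those in $\vec\omega$.

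Linear independence follows from the decomposition $K=\bigoplus_{j=0}^{m-1}k(x)y^j$: distinct monomials $x^{i-1}y^{j-1}$ with $1\le j\le m-1$ are $k$-linearly independent in $K$, so the corresponding $\omega_{ij}$ are $k$-linearly independent in $\Omega_K$. For the dimension count, a brief case split on whether $dj/m\in\Z$ shows $\#\{i\ge 1:mi+dj<md\}=d_j$ in either case; summing over $j$ and applying the classical identity $\sum_{j=1}^{m-1}\lfloor dj/m\rfloor=((m-1)(d-1)+g-1)/2$ gives $\#\vec\omega=((d-1)(m-1)-g+1)/2$, which matches the genus in \eqref{eq:g}. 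Thus $\vec\omega$ is a $k$-basis of $\Omega_K(0)$.

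The equivalent description via $d_j$ and $m_i$ is routine: applying the same case split symmetrically, $mi+dj<md$ with $i,j\ge 1$ translates to $1\le i\le d_j$ and, interchanging roles, to $1\le j\le m_i$; taking $j=1$ recovers the range $1\le i\le d-\lfloor d/m\rfloor-1$, and similarly for $j$. The main obstacle will be executing the valuation computation at $P_\infty$ cleanly, in particular tracking the contributions of the ramification index $m/g$ and the valuation of $dx$ correctly, and then exploiting the divisibility of $mi+dj$ by $g$ to pass from $\le md-g$ to the clean bound $mi+dj<md$.
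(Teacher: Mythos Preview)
Your argument is correct and complete. You compute $v_P(\omega_{ij})$ at every place (finite ramified, finite unramified, and infinite), use the divisibility $g\mid mi+dj$ to convert $mi+dj\le md-g$ into the strict inequality, establish linear independence via the $k(x)$-basis $1,y,\ldots,y^{m-1}$ of $K$, and match $\#\vec\omega$ with the genus formula via the standard floor-sum identity. The final bookkeeping for $d_j$ and $m_i$ is fine.

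This is a genuinely different route from the paper's proof. The paper does not compute any valuations; it simply cites \cite[3.8]{MN17} (a result stated over $\C$, with the remark that the argument carries over to positive characteristic) for the basis $\{x^{i-1}y^{-k}dx:\,dk-mi\ge\gcd(m,d)\}$ of $\Omega_K(0)$, then substitutes $k=m-j$ and rewrites the inequality $d(m-j)-mi\ge\gcd(m,d)$ as $mi+dj<md$ using the same divisibility observation you make. Your approach buys self-containment: it does not depend on an external reference whose proof must be rechecked in characteristic $p$, and it makes transparent exactly where the hypotheses $p\nmid m$ and $f$ squarefree enter (tameness at infinity and $f'(a)\ne 0$ at the branch points, respectively). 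The paper's approach buys brevity. One cosmetic point: you use $g$ for both $\gcd(m,d)$ and the genus in adjacent sentences, which is momentarily confusing; renaming one of them would help.
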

\begin{proof}
Note that $\omega_{ij}=\frac{1}{m}x^{i-1}y^{j-m}dx$,
with $p\nmid m$.
It follows from \cite[3.8]{MN17} (which treats $X/\C$ but whose proof also works for $X/k$ and can be independently derived using the methods of \cite{Go52}) that the set
\[
\{x^{i-1}y^{-k}dx: 1\le i < d,\ 1\le k\le m-1,\ dk-mi \ge \gcd(m,d)\}
\]
is a basis for $\Omega_K(0)$. Taking $k=m-j$ and rearranging yields the basis
\[
\vec\omega = \{\omega_{ij}: mi+dj\le md-\gcd(m,d)\} = \{\omega_{ij}:mi+dj<md\},
\]
and the bounds on $i$ and $j$ immediately follow.
\end{proof}
For $X/k$ defined by $F(x,y)=f(x)-y^m=0$, if we let $f^n_a$ denote the coefficient of $x^a$ in $f(x)^n$ then
\[
F^{p-1}_{a,b}= \begin{cases}f^{p-1-b/m}_a, &\text{if } m\mid b\text{ and }b \le m(p-1), \\ 0&\text{otherwise},\end{cases}
\]
(here we have used $\binom{p-1}{e}(-1)^e\equiv 1\bmod p$), thus for all $1\le i,k < d$ and $1\le j,\ell < m$ we have
\[
F^{p-1}_{ip-k,\,jp-\ell} = \begin{cases} f^{p-1 - (jp-\ell)/m}_{ip-k}&\text{if } m\mid (jp-\ell), \\ 0&\text{otherwise}.\end{cases}
\]
Now $1\le j,\ell < m$ and $p\nmid m$, so whenever $F^{p-1}_{ip-k,\,jp-\ell}\ne 0$ we must have $\ell=jp\rem m > 0$ and
\begin{equation}\label{eq:nj}
n_j \coloneqq p-1-(jp-\ell)/m = \frac{(m-j)p-(m-\ell)}{m} = p-1-\lfloor jp/m\rfloor.
\end{equation}

Let us order the basis for $\Omega_K(0)$ given by Lemma~\ref{lem:basis} as $\vec{\omega} = (\omega_{11},\omega_{21},\ldots,\omega_{12},\ldots)$ with the $\omega_{ij}$ ordered first by~$j$ and then by~$i$.  The Cartier--Manin matrix of $X$ can then be described in block form with blocks indexed by $j$ and $\ell$ containing entries indexed by $i$ and $k$:
\begin{align}\label{eq:B}
A_p &\coloneqq  [B^{j\ell}]_{j\ell}\qquad\qquad\qquad\, 1\le j,\ell \le \mu\coloneqq m_1 = m - \lfloor m/d \rfloor - 1,\\\notag
B^{j\ell} &\coloneqq [(b^{j\ell}_{ik})^{1/p}]_{ik}\,\qquad\qquad 1\le i \le d_j\text{ and } 1\le k\le d_\ell,\\\notag
b^{j\ell}_{ik} &\coloneqq \begin{cases} f^{n_j}_{ip-k} &\qquad\qquad\quad\ \text{if $(jp-\ell)/m\in\Z_{\ge 0}$},\\0&\qquad\qquad\quad\ \text{otherwise}.\end{cases}
\end{align}
The diagonal blocks $B^{j,j}$ are square but the others typically will not be square, since the bound on $i$ depends on~$j$ while the bound on $k$ depends on~$\ell$.
We also note that there is at most one nonzero $B^{j\ell}$ in each row $j$, and in each column $\ell$ of $[B^{j\ell}]_{j\ell}$, since any nonzero $B^{j\ell}$ must have $\ell\equiv jp\bmod m$ (there will be no nonzero $B^{j\ell}$ for $j$ if no $\ell\le \mu$ satisfies $\ell\equiv jp\bmod m$; this happens, for example, when $j=1$ and $d=m=5$ with  $p\equiv 4\bmod 5$).
\begin{example}
For $m=5$ and $d=3$ we have $g=4$, and the $4\times 4$ matrix $A_p$ consists of $3\times 3 = 9$ blocks: one $2\times 2$, two $2\times 1$, two $1\times 2$, and four $1\times 1$.  For $k=\Fp$, the matrices $A_p$ for $p\equiv 1,2,3,4 \bmod 5$ are
\begin{footnotesize}
\[
\begin{pmatrix}
f_{p-1}^{(4p-4)/5} & f_{p-2}^{(4p-4)/5} & 0 & 0\\
f_{2p-1}^{(4p-4)/5} & f_{2p-2}^{(4p-4)/5} & 0 & 0\\
0 & 0 & f_{p-1}^{(3p-3)/5} & 0\\
0 & 0 & 0 & f_{p-1}^{(2p-2)/5}
\end{pmatrix},\  
\begin{pmatrix}
0 & 0 & f_{p-1}^{(4p-3)/5}  & 0\\
0 & 0 & f_{2p-1}^{(4p-3)/5} & 0\\
0 & 0 & 0 & 0\\
f_{p-1}^{(2p-4)/5} & f_{p-2}^{(2p-4)/5} & 0 & 0
\end{pmatrix},
\]
\[
\begin{pmatrix}
0 & 0 & 0 & f_{p-1}^{(4p-2)/5}\\
0 & 0 & 0 & f_{2p-1}^{(4p-2)/5}\\
f_{p-1}^{(3p-4)/5} & f_{p-2}^{(3p-4)/5} & 0 & 0\\
0 & 0 & 0 & 0
\end{pmatrix},\ 
\begin{pmatrix}
0 & 0 & 0 & 0\\
0 & 0 & 0 & 0\\
0 & 0 & 0 & f_{p-1}^{(3p-2)/5}\\
0 & 0 & f_{p-1}^{(2p-3)/5} & 0
\end{pmatrix}.
\]
\end{footnotesize}\hspace{-2pt}
For $m=3$ and $d=5$ we also have $g=4$ but now the $4\times 4$ matrix $A_p$ consists of $2\times 2 = 4$ blocks: one $3\times 3$, one $3\times 1$, one $1\times 3$, and one $1\times 1$. For $k=\Fp$ the matrices $A_p$ for $p\equiv1,2\bmod 3$ are
\begin{footnotesize}
\[
\begin{pmatrix}
f_{p-1}^{(2p-2)/3} & f_{p-2}^{(2p-2)/3} & f_{p-3}^{(2p-2)/3} & 0\\
f_{2p-1}^{(2p-2)/3} & f_{2p-2}^{(2p-2)/3} & f_{2p-3}^{(2p-2)/3} & 0\\
f_{3p-1}^{(2p-2)/3} & f_{3p-2}^{(2p-2)/3} & f_{3p-3}^{(2p-2)/3} & 0\\
0 & 0 & 0 & f_{p-1}^{(p-1)/3}
\end{pmatrix},\ 
\begin{pmatrix}
0 & 0 & 0 &  f_{p-1}^{(2p-1)/3}\\
0 & 0 & 0 & f_{2p-1}^{(2p-1)/3}\\
0 & 0 & 0 & f_{3p-1}^{(2p-1)/3}\\
f_{p-1}^{(p-2)/3} & f_{p-2}^{(p-2)/3} & f_{p-3}^{(p-2)/3} & 0
\end{pmatrix}.
\]
\end{footnotesize}\hspace{-2pt}
In both cases $\tr A_p = 0$ for $p\not\equiv 1\bmod m$, but this is not true in general (consider $m=4$ and $d=3$, for example).
\end{example}

The block form of the Cartier--Manin matrix $A_p$ given by \eqref{eq:B} implies the following theorem, which plays a key role in our algorithm for computing $A_p$ and may also be of independent interest.

\begin{theorem}\label{thm:supercartier}
Let $X\colon y^m=f(x)$ be a superelliptic curve over a perfect field of characteristic $p>0$ with $d\coloneqq \deg(f)$. Let~$\vec{\omega}$ be the basis of $\Omega_{k(X)}(0)$ given by Lemma~\ref{lem:basis}, and for $1\le j \le  m_1 = m-\lfloor m/d\rfloor-1$, let $\vec{\omega}_j\coloneqq \{\omega_{ij'}\in\boldsymbol\omega:j'=j\}$.  For $1\le j\le m_1$ the Cartier operator maps the subspace spanned by $\vec{\omega_j}$ to the subspace spanned by $\vec{\omega}_\ell$, with $\ell\equiv jp\bmod m$, and this action is given by the matrix $B^{j\ell}$ defined in \eqref{eq:B}.  In particular, when $p\equiv 1\bmod m$ the Cartier operator fixes each of the subspaces spanned by $\vec{\omega}_j$.
\end{theorem}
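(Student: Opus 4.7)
The plan is to collect the computations already carried out in the lead-up --- the St\"ohr-Voloch identity~\eqref{eq:SV3}, the basis of $\Omega_K(0)$ from Lemma~\ref{lem:basis}, and the explicit expansion of $F^{p-1}$ for $F = y^m - f(x)$ --- and observe that they deliver the block decomposition directly.

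First I would expand $F^{p-1} = (y^m - f(x))^{p-1}$ via the binomial theorem. Using $\binom{p-1}{e}(-1)^e \equiv 1 \pmod p$ gives $F^{p-1} \equiv \sum_{e=0}^{p-1} f(x)^e\, y^{m(p-1-e)} \pmod p$, so the coefficient $F^{p-1}_{ab}$ of $x^a y^b$ vanishes unless $m \mid b$ with $0 \le b \le m(p-1)$, in which case $F^{p-1}_{ab} = f^{p-1-b/m}_a$. I would then substitute this into~\eqref{eq:SV3} with $a = ip - k$, $b = jp - \ell$, and $1 \le j, \ell \le m_1 \le m - 1$. Since $\gcd(p, m) = 1$ and $\ell$ lies in $\{1, \ldots, m-1\}$, the divisibility $m \mid (jp - \ell)$ pins down a unique $\ell$ for each $j$, namely the residue with $\ell \equiv jp \pmod m$. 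When this congruence holds, the exponent $p - 1 - (jp-\ell)/m$ simplifies via~\eqref{eq:nj} to $n_j = p - 1 - \lfloor jp/m \rfloor$, so the coefficient of $\omega_{ij}$ in $\Cartier(\omega_{k\ell})$ is precisely $(f_{ip-k}^{n_j})^{1/p}$, which is the $(i, k)$-entry of the block $B^{j\ell}$ defined in~\eqref{eq:B}.

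Next I would invoke the well-known fact that the Cartier operator preserves $\Omega_K(0)$. Since Lemma~\ref{lem:basis} identifies $\vec\omega$ as a basis for this space, any $\omega_{ij}$ appearing with nonzero coefficient in the image must already lie in $\vec\omega$, so the index ranges $i \le d_j$ and $k \le d_\ell$ defining the block $B^{j\ell}$ are forced automatically. Putting these together shows that $\Cartier$ splits as a direct sum of linear maps between the subspaces spanned by the $\vec\omega_j$'s, with the block linking $\vec\omega_\ell$ and $\vec\omega_j$ nonzero exactly when $\ell \equiv jp \pmod m$; since this congruence has a unique solution for each $j$, each such subspace is mapped into a single target subspace, and when $p \equiv 1 \pmod m$ that target is $\vec\omega_j$ itself.

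No step is truly difficult --- the whole argument is index bookkeeping on top of the already-established identity~\eqref{eq:SV3} --- but the one item worth care is the justification that only elements of $\vec\omega$ appear on the right-hand side: the naive index range $i + j \le \deg F - 1$ from~\eqref{eq:SV2} corresponds to the complete set of $\omega_{ij}$ that forms a basis only when $y^m = f(x)$ cuts out a smooth plane model, whereas for general superelliptic curves (where $\gcd(m,d) > 1$ and the point at infinity may be singular) one must appeal to Cartier's preservation of $\Omega_K(0)$ together with Lemma~\ref{lem:basis} to restrict to the adjoint basis.
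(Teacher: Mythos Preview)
Your proof is correct and follows essentially the same route as the paper: the paper's own proof is the single line ``This is an immediate consequence of~\eqref{eq:B},'' and what you have written is precisely the derivation of the block decomposition~\eqref{eq:B} that the paper carries out in the paragraphs preceding it (binomial expansion of $F^{p-1}$, substitution into the St\"ohr--Voloch formula~\eqref{eq:SV3}, and the congruence $m\mid(jp-\ell)$), together with the observation that regularity of $\Cartier(\omega_{k\ell})$ confines the output indices to the adjoint basis of Lemma~\ref{lem:basis}. Your extra care on this last point---noting that for $\gcd(m,d)>1$ one needs the preservation of $\Omega_K(0)$ rather than the naive bound $i+j\le\deg F-1$---is a useful clarification that the paper leaves implicit.
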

\begin{proof}
This is an immediate consequence of \eqref{eq:B}.
\end{proof}

\begin{remark}
In \cite[Lemma 5.1]{B01} Bouw gives formulas for the coefficients of the Hasse--Witt matrix of a general cyclic cover $Y\colon y^m=f(x)$ of $\mathbf{P}^1$ in terms of the (possibly repeated) roots of the polynomial $f\in k[x]$, where $k$ is an algebraically close field of characteristic $p$.  When $f$ is squarefree, Bouw's formulas agree with \eqref{eq:B}, after taking into account the transposition needed to get the Cartier--Manin matrix and a possible change of basis (I'm grateful to Wanlin Li and John Voight for bringing this to my attention).
One can compute analogs of the formulas in \eqref{eq:B} to handle $f$ that are not squarefree that take into account the multiplicities of its root, but we do not consider this case here.  Note that the genus of $Y$ and therefore the dimensions of $A_p$ will be less than that given by \eqref{eq:g} when~$f$ is not squarefree, so while the formulas may be more involved, the problem is computationally easier.
\end{remark}

\section{Linear recurrences}

The results of the previous section imply that to compute the Cartier--Manin matrix $A_p$ of a superelliptic curve $X\colon y^m=f(x)$ over $\Fp$ it suffices to compute certain coefficients of certain powers of $f(x)$.
In this section we derive linear recurrences that allow us to do this efficiently, both when $f\in \Fp[x]$ and when $f\in \Z[x]$ and we wish to compute certain coefficients of certain powers of the reduction of $f$ modulo many primes $p$.  In this section we generalize \cite[\S 2]{HS16}, which treated the case $m=2$, in which case $A_p=B$ consists of a single block $B^{11}$ (so $j=\ell=1$), the powers $f^n$ that appear in the matrix entries are always the same ($n=(p-1)/2$), and every prime $p\nmid m$ is congruent to $1$ modulo $m$.  Here we allow all of these parameters to vary.

Let $f\in \Z[x]$ be a squarefree polynomial of degree $d\ge 3$, which we shall write as $f(x)=x^ch(x)$ with $c=0,1$ and $h(0)\ne 0$ (note that $x^2\nmid f$).\footnote{The reader may wish to assume $c=0$ and $f=h$ on a first reading.}
Let $h(x)=\sum_{i=0}^r h_ix^i$, and for $n\ge 1$ let $h^n_i$ denote the coefficient of $x^i$ in $h(x)^n$.
As shown in \cite[\S 2]{HS16}, the identities $h^{n+1}=h\cdot h^n$ and $(h^{n+1})'=(n+1)h^n$ yield the linear relation
\begin{equation}\label{eq:fnk}
\sum_{i=0}^r((n+1)i-k)h_ih^n_{k-i} = 0,
\end{equation}
which is valid for all $k\in \Z$ and $n\in \Z_{\ge 0}$.
Observing that $n_j =((m-j)p-(m-\ell))/m$ is the exponent on $f$ in every entry of the nonzero block $B^{j\ell}$ defined in \eqref{eq:B}, let us set $n=n_j$ and rewrite \eqref{eq:fnk} as
\begin{equation}\label{eq:fnkp}
0 = \sum_{i=0}^r((m-j)p+\ell)i-mk)h_ih^{n_j}_{k-i}\equiv \sum_{i=0}^r(\ell i-mk)h_ih^{n_j}_{k-i}\bmod p,
\end{equation}
which is valid for all $k\in \Z$.  We now define
\[
v^{n_j}_k:=[h^{n_j}_{k-r+1},\ldots,h^{n_j}_k]\in\Z^r,
\]
and put $s\coloneqq p-1-cn_j$.  The entries of $v^n_s\bmod p$ suffice to compute the first row of block $B^{j\ell}$ in $A_p$; note that $n$ (and potentially $s$) depend on $j$ and will vary from block to block.  
We have $v^{n_j}_0=[0,\ldots,0,h_0^{n_j}]=h_0^{n_j}v_0^0$, where $v_0^0\coloneqq [0,\ldots,0,1]$.  Noting that $s<p$ and $p\nmid m$ and $p\nmid h_0$ (since $f$ is squarefree), solving for $h_k^n$ in \eqref{eq:fnkp} yields
\begin{equation}\label{eq:vn}
v^{n_j}_s \equiv \frac{v^{n_j}_0}{(mh_0)^s s!}\prod_{i=0}^{s-1} M^\ell_i \equiv m^{cn_j}h_0^{(c+1)n_j}(-1)^{cn_j+1}(cn_j)!v^0_0 \prod_{i=0}^{s-1} M^\ell_i\bmod p,
\end{equation}
where
\begin{equation}\label{eq:Mlk}
M^\ell_{i-1}:=\begin{bmatrix}0&\cdots & 0 &(\ell r-mi)h_r\\mih_0&\cdots&0&(\ell(r-1)-mi)h_{r-1}\\\vdots&\ddots&\vdots&\vdots\\0&\cdots&mih_0&(\ell-mi)h_1\end{bmatrix}
\end{equation}
is an integer matrix that depends on the integers $i,\ell,m$ and the polynomial $h$ of degree $r$, but is independent of~$p$.  This independence is the key to obtaining an average polynomial-time algorithm.

\begin{remark}\label{rem:lastrow}
Alternatively, if we define $w_k^n\coloneqq [h_{k+r-1}^{n_j},h_{k+r-2}^{n_j},\ldots, h_k^{n_j}]$ and $t\coloneqq d_jp-d_\ell-cn_j$, the entries of $w_t^n$ suffice to compute the last row of block $B^{j\ell}$ in $A_p$.  Equivalently, if we put $\tilde h(x)\coloneqq x^rh(1/x)$ (in other words, reverse the coefficients of $h$) and define $\tilde v_k^n$ in terms of $\tilde h^n$ as above, it suffices to compute $\tilde v_{\tilde s}^n$ where
\begin{align}\label{eq:vvn}
\tilde s \coloneqq rn_j - t &= dn_j-d_jp+d_\ell = p - 1 - \lfloor (dj\rem m)p/m\rfloor
%&= d(p-1-\lfloor jp/m\rfloor) - (d-\lfloor dj /m \rfloor-1) p + d - \lfloor d(jp\rem m)/m\rfloor-1\\\notag
%&= (1 + \lfloor dj/m\rfloor) p - (1+\lfloor jp/m\rfloor)d + d_\ell\\\notag
%&= (1-(dj\rem m)/m)p - d  + (jp\rem m)/m)d + d - \lfloor d(jp\rem m)/m\rfloor-1\\\notag
%&= (1-(dj\rem m)/m)p + (djp\rem m)/m - 1\\\notag
%&= p - 1 - \lfloor (dj\rem m)p/m\rfloor
\end{align}
When $m\nmid dj$ we will have $\tilde s < s$ if $c=0$ (and possibly even if $c=1$), in which case we can compute the last row more efficiently than the first.
\end{remark}

We have shown how to compute the first (or last) row of each of the blocks $B^{j\ell}$ that appear in the Cartier--Manin matrix of the superelliptic curve $X$ (either for $X/\Fp$ or for the reductions of $X/\Q$ modulo varying primes~$p$) by computing reductions of products of integer matrices modulo primes.  To compute the remaining rows in the same fashion would require working modulo powers of primes, which is something we wish to avoid.  In the next section we show how to efficiently reduce the computation of the remaining rows to the computation of the first row using translated curves, which allows us to always work modulo primes.

\section{Translation tricks}

Let $X\colon y^m=f(x)$ be a superelliptic curve over $\Fp$ of genus $g$, with $d\coloneqq \deg (f)$.  Let $A_p$ be the Cartier--Manin matrix $A_p$, and for $a\in \Fp$, let $A_p(a)$ be the Cartier--Manin matrix of the translated curve $X_a\colon y^m=f(x+a)$, whose blocks we denote $B^{j\ell}(a)$ with entries $b^{j\ell}_{ik}(a)$.  We omit the exponent $1/p$ that appears in \eqref{eq:B} because we are now working over $\Fp$.  The curve $X_a$ is isomorphic to $X$, which forces $A_p$ and $A_p(a)$ to be conjugate, but these matrices are typically not equal.  Our objective in this section is to show that we can compute $B^{j\ell}$ by solving a linear system that involves the entries that appear in just the first rows of $B^{j\ell}(a)$, where $a$ ranges over $d_j=d-\lfloor dj/m\rfloor-1$ 
distinct values of $a\in \Fp$.  Note that $B^{j\ell}$ has $ d_j$ rows and $ d_\ell$ columns, and we recall from \eqref{eq:B} that the $g\times g$ matrix $A_p$ is made up of $\mu^2$ blocks $B^{j\ell}$, where $\mu\coloneqq m_1= m-\lfloor m/d\rfloor-1$,
and we have $d_1+\cdots +d_\mu = g$.
We shall assume $p\ge d$, so that $ d_j< d$ distinct values of $a$ exist in $\Fp$; for $p<d$ we can easily compute $A_p$ directly from \eqref{eq:B}.

The results in this section generalize \cite[\S 5]{HS16}, which treated the case $m=2$, where $\mu=1$ and $A=B^{11}$. In our current setting $A_p$ consists of $\mu\times \mu$ rectangular blocks $B^{j\ell}$ that need not be square.

For $a\in \Fp$ and $1\le j\le \mu$ we define the upper triangular $ d_j\times  d_j$ matrix
\[
T^j(a)\coloneqq [t_{ik}^j(a)]_{ik},\qquad t^j_{ik}(a) \coloneqq \binom{k-1}{i-1}a^{k-i},\qquad 1\le i,k\le  d_j.
\]
We also define $T(a)$ to be the $g\times g$ block diagonal matrix with the matrices $T^j(a)$ on the diagonal, for $1\le j\le \mu$.
We note that $T^j(a)^{-1} = T^j(-a)$ and $T(a)^{-1}=T(-a)$, as the reader may verify (or see the proof below).

\begin{lemma}\label{lem:translation}
For $a\in \Fp$ we have $B^{j\ell}(a)T^\ell(a)=T^j(a)B^{j\ell}$ for all $1\le j,\ell \le \mu$, and $A_p(a)=T(a)A_pT(-a)$.
\end{lemma}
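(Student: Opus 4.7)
The plan is to recognize both identities as consequences of a single change-of-basis calculation applied to the canonical isomorphism between $X$ and its translate $X_a$.

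First I would introduce the isomorphism $\phi_a\colon X\to X_a$ sending $(x,y)\mapsto(x-a,y)$, so that on function fields $\phi_a^*$ sends the coordinate $\tilde x$ on $X_a$ to $x-a$. Writing the basis of $\Omega_{k(X_a)}(0)$ from Lemma~\ref{lem:basis} as $\tilde\omega_{k\ell}=\tilde x^{k-1}y^{\ell-1}d\tilde x/(my^{m-1})$ and applying $\phi_a^*$, the binomial expansion of $(x-a)^{k-1}$ gives
\[
\phi_a^*\tilde\omega_{k\ell}=(x-a)^{k-1}y^{\ell-1}\frac{dx}{my^{m-1}}=\sum_{i=1}^{k}\binom{k-1}{i-1}(-a)^{k-i}\omega_{i\ell}=\sum_i(T^\ell(-a))_{ik}\omega_{i\ell}.
\]
Since the bound $d_j$ in Lemma~\ref{lem:basis} depends only on $m,d,j$, this pullback preserves the block decomposition; with respect to the ordered bases on $X$ and $X_a$, $\phi_a^*$ is represented by the block-diagonal matrix $T(-a)$ with diagonal blocks $T^\ell(-a)$.

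Next I would invoke the fact that the Cartier operator is intrinsic to the function field: since the characterizing properties (1)--(4) listed in \S\ref{sec:cartier} are preserved by any field isomorphism, $\phi_a^*$ intertwines the Cartier operators, i.e.\ $C_X\circ\phi_a^*=\phi_a^*\circ C_{X_a}$. Translating this operator identity into matrices in the chosen bases yields $A_p\cdot T(-a)=T(-a)\cdot A_p(a)$.

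Finally I would verify that $T^j(a)\cdot T^j(-a)=I_{d_j}$ — this is the Vandermonde/Pascal identity $\sum_k\binom{\ell-1}{k-1}\binom{k-1}{i-1}(-1)^{\ell-k}=\delta_{i\ell}$, or more conceptually the observation that $T^j(a)$ represents the change of basis $(x-a)^{i-1}\mapsto x^{i-1}$ (truncated to the first $d_j$ powers), whose inverse is $T^j(-a)$. Thus $T(-a)$ is invertible with inverse $T(a)$, and solving the matrix identity of the previous paragraph gives $A_p(a)=T(a)A_pT(-a)$. Reading off the $(j,\ell)$-block then yields $B^{j\ell}(a)=T^j(a)B^{j\ell}T^\ell(-a)$, and right-multiplying by $T^\ell(a)$ produces $B^{j\ell}(a)T^\ell(a)=T^j(a)B^{j\ell}$.

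The main thing to be careful about is index bookkeeping — confirming that the pullback truly acts block-diagonally (because $\phi_a^*$ does not mix different values of $j$, as it fixes $y$) and that the conventions for the Cartier--Manin matrix match, so that conjugation by the change-of-basis matrix gives $T(a)A_pT(-a)$ rather than its transpose. Everything else is routine once the role of $T^j(-a)$ as the matrix of $\phi_a^*$ on the $\ell$-th block is identified.
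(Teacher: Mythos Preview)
Your proposal is correct and follows essentially the same approach as the paper: both arguments rest on the binomial expansion of the translated $x$-coordinate to identify the change-of-basis matrix as a Pascal-type triangular matrix, together with the intrinsic nature of the Cartier operator. The only differences are cosmetic: you phrase things via the explicit isomorphism $\phi_a$ and its pullback (obtaining $T(-a)$ as the matrix of $\phi_a^*$), whereas the paper works directly with the substitution $x\mapsto x+a$ inside a single function field (obtaining $T(a)$ as the change-of-basis matrix); and you deduce the global identity $A_p(a)=T(a)A_pT(-a)$ first and then read off the block identity, whereas the paper observes that the block identity implies the global one and proves the blocks directly. Your remark about checking conventions is well placed---the semilinearity of $\Cartier$ is harmless here because the base field is $\Fp$, so the matrix of a composition is the product of matrices in the expected order.
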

\begin{proof}
From the block structure of $A_p$ given by \eqref{eq:B} it is clear that the first statement implies the second.
Let $\vec{\omega}(a)=\{\omega_{ij}(a)\}$ be the basis given by Lemma~\ref{lem:basis} for $X_a$ and define $\vec{\omega}_j(a)\coloneqq \{\omega_{ij'}(a)\in\vec{\omega}:j'=j\}$.  
By Theorem~\ref{thm:supercartier}, the Cartier operator of $X$ maps the subspace spanned by $\vec{\omega}_j$ to the subspace spanned by $\vec{\omega}_\ell$ via the matrix $B^{j\ell}$, and the Cartier operator of $X_a$ maps the subspace spanned by $\vec{\omega}_j(a)$ to the subspace spanned by $\vec{\omega}_\ell(a)$ via the matrix $B^{j\ell}(a)$.  We just need to check that the matrices $T^\ell(a)$ and $T^j(a)$ correspond to the change of basis that occurs when we replace $x$ with $x+a$.
Noting that $d(x+a)=dx$ and $F(x+a)_y=F(x)_y$, we have
\begin{align*}
\omega_{kj}(a) = (x+a)^{k-1}y^{j-1}dx/F_y &= \sum_{i=1}^k\binom{k-1}{i-1}a^{k-i}x^{i-1}y^{j-1}dx/F_y\\
                                          &= \sum_{i=1}^k t^j_{ik}(a)\omega_{ij} = \sum_{i=1}^{ d_j}t^j_{ik}(a)\omega_{ij},
\end{align*}
and it follows that $\vec{\omega}_j(a)=T^j(a)\vec{\omega}_j$ (here we are viewing $\vec{\omega}_j$ and $\vec{\omega}_j(a)$ as column vectors).
This holds for any~$j$, including $\ell$, and the lemma follows.
\end{proof}

Let us now consider the computation of the $ d_j\times  d_\ell$ block $B^{j\ell}$.  Computing the $k$th entry in the first row of both sides of the identity $B^{j\ell}(a)T^\ell(a)=T^j(a)B^{j\ell}$ given by Lemma~\ref{lem:translation} yields
\[
\sum_{s=1}^{ d_\ell} b_{1s}^{j\ell}(a)t_{sk}^\ell(a) = \sum_{t=1}^{ d_j} t^j_{1t}(a)b^{j\ell}_{tk},
\]
which defines a linear equation with $ d_j$ unknowns $b^{j\ell}_{tk}$ in terms of the $b_{1s}^{j\ell}(a)$ and matrices $T^j(a)$ and $T^\ell(a)$ we assume are known.
Taking $ d_j$ distinct values of $a$, say $(a_1,\ldots,a_{ d_j})$ yields a linear system with $ d_j$ equations and $ d_j$ unknowns that we can solve because the $ d_j\times  d_j$ matrix $[t^j_{1t}(a_i)]_{it}=[a_i^{t-1}]_{it}$ is an invertible Vandermonde matrix $V(a_1,\ldots,a_{ d_j})$.  If we now define the $ d_j\times  d_\ell$ matrix
\begin{equation}\label{eq:B1}
B^{j\ell}_1(a_1,\ldots,a_{ d_j}) \coloneqq [b_{1s}^{j\ell}(a_i)]_{is}
\end{equation}
and let $W_1^{j\ell}$ be the $ d_j\times d_\ell$ matrix whose $i$th row is the $i$th row of $B_1^{j\ell}$ times $T^\ell(a_i)$, we can compute $B^{j\ell}$ as
\begin{equation}\label{eq:B1toB}
B^{j\ell} = V(a_1,\ldots,a_{ d_j})^{-1}W_1^{j\ell}.
\end{equation}

\begin{remark}
If we use Remark~\ref{rem:lastrow} to compute the last row of $B^{j\ell}$ we can compute the first row of $B^{j\ell}(a_i)$ for $a_1,\ldots,a_{d_j-1}$ and use \eqref{eq:B1toB} to deduce the last row of $W_1^{j\ell}$ from the last row of $B^{j\ell}$.  One might suppose that we could instead compute the last rows of the $B^{j\ell}(a_i)$ instead of their first rows, but this is not enough to deduce $B^{j\ell}$.
\end{remark}

\begin{lemma}\label{lem:Bbound}
Let $X\colon y^m=f(x)$ be a superelliptic curve over $\Fp$ with $d\coloneqq \deg(f)$, and let $a_1,\ldots,a_{d_1}$ be distinct elements of $\Fp$, where $ d_1 = d-\lfloor d/m\rfloor - 1$.  Given the matrices $B_1^{j\ell}(a_1,\ldots, a_{d_j})$ for $1\le j\le \mu=m_1=m-\lfloor m/d\rfloor-1$ with $\ell\equiv jp\bmod m$, we can compute the Cartier--Manin matrix $A_p$ of $X$ using $O(md^3)$ ring operations in $\Fp$ and space for $O(md+d^2)$ elements of $\Fp$.
\end{lemma}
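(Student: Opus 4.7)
The plan is to reconstruct each block $B^{j\ell}$ of $A_p$ in turn using the identity \eqref{eq:B1toB}. By the observation following \eqref{eq:B}, in each block row $j$ only one $\ell$ (namely $\ell\equiv jp\bmod m$ with $1\le\ell\le\mu$) can give a nonzero block; there are therefore at most $\mu\le m$ nonzero blocks to process, and the remaining blocks contribute no work or storage.

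I would first precompute, once and for all, a table of binomials $\binom{k-1}{s-1}$ for $0<s\le k\le d$ in $O(d^2)$ time and space, along with a representation of the Vandermonde matrix $V(a_1,\ldots,a_{d_1})$ supporting $O(d_j^2)$-cost solves for every $d_j\le d_1$ (for instance, by storing the ordered node list and applying a Bj\"orck--Pereyra-style algorithm). For each nonzero pair $(j,\ell)$ I would then (i) form the matrix $W_1^{j\ell}$ by right-multiplying each of the $d_j$ rows of the given $B_1^{j\ell}$ by the upper triangular matrix $T^\ell(a_i)$, using the binomial table and the $O(d_\ell)$ powers $a_i^0,\ldots,a_i^{d_\ell-1}$, at a cost of $O(d_\ell^2)$ ring operations per row and $O(d_j d_\ell^2)$ per block; and (ii) solve the Vandermonde system $V(a_1,\ldots,a_{d_j})\,B^{j\ell}=W_1^{j\ell}$ column by column, at a cost of $O(d_j^2)$ per right-hand side and $O(d_j^2 d_\ell)$ per block.

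Summing the per-block cost $O(d_j d_\ell^2 + d_j^2 d_\ell)=O(d^3)$ over the at most $O(m)$ nonzero blocks yields the claimed $O(md^3)$ bound on ring operations in $\Fp$. For the space bound, the shared binomial table and Vandermonde data occupy $O(d^2)$; per-block scratch for the current $B_1^{j\ell}$, $W_1^{j\ell}$, a single $T^\ell(a_i)$, and the output block all fit in $O(d^2)$; streaming the completed blocks to output and bookkeeping the sparse block pattern (including the exponents $n_j$ and indices $\ell$) contribute the remaining $O(md)$.

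The only mildly delicate step is arranging the Vandermonde solve to cost $O(d_j^2)$ per column rather than the $O(d_j^3)$ of naive Gaussian elimination; once this is handled, the rest is a direct application of Lemma~\ref{lem:translation} together with \eqref{eq:B1toB} and careful bookkeeping of the block structure from \eqref{eq:B}.
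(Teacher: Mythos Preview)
Your proof is correct and follows essentially the same approach as the paper: bound the number of nonzero blocks by $\mu<m$, reconstruct each block via \eqref{eq:B1toB} at a cost of $O(d_jd_\ell^2)$ for forming $W_1^{j\ell}$ and $O(d_j^2d_\ell)$ for the Vandermonde solve, and sum to $O(md^3)$ with $O(md+d^2)$ space. The only cosmetic differences are that the paper computes $V(a_1,\ldots,a_{d_j})^{-1}$ explicitly in $O(d_j^2)$ operations (citing \cite{EF06}) rather than applying a Bj\"orck--Pereyra solver, and builds each $T^\ell(a_i)$ on the fly via Pascal's recurrence rather than caching a binomial table; neither change affects the argument or the bounds.
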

\begin{proof}
We can compute $V(a_1,\ldots,a_{ d_j})^{-1}$ using $O( d_j^2)$ ring operations in $\Fp$ \cite{EF06}, and we can compute $T^\ell(a_i)$ in $O( d_j^2)$ ring operations (using $\binom{k}{i} = \binom{k-1}{i-1}+\binom{k-1}{i}$).  The computation of $W^{j\ell}$ requires $O( d_j d_\ell^2)$ $\Fp$-operations, and the matrix product in \eqref{eq:B1} uses $O( d_j^2 d_l)$ ring operations, so it takes $O( d_j^2 d_\ell+ d_\ell d_j^2) = O(d^3)$ ring operations to compute each $B^{j\ell}$.  There are at most $ \mu < m$ nonzero $B^{j\ell}$ to compute, so the total cost of computing $A_p$ given the matrices $B_1^{j\ell}(a_1,\ldots,a_{ d_j})$ is $O(md^3)$ ring operations in $\Fp$ while storing $O(md+d^2)$  elements of~$\Fp$.
\end{proof}

\begin{remark}
In terms of the genus $g\sim md/2$, the bound $O(md^3)$ is equivalent to $O(gd^2)$, which is always bounded by $O(g^3)$ but can be as small as $O(g)$ if $d=O(1)$ (this assumes we use a sparse representation of $A_p$).
\end{remark}

\begin{remark}
In addition to playing a key role in our strategy for computing $A_p$, using translated curves can improve performance, as noted in the case of hyperelliptic curves in \cite[\S 6.1]{HS16}.  In particular, if $f(x)$ has a rational root $a$ then the translated curve $X_a\colon y^m=f(x+a)=xh(x)$ will have $r=d-1$ and $c=d-r=1$, reducing both the dimension $r$ and number $t=p-1-cn$ of matrices $M^\ell_k$ that appear in the product in \eqref{eq:vn}.  It thus makes sense to choose our distinct translation points $a$ to be roots of $f(x)$ whenever possible.  Additionally, if $d$ is divisible by $m$ and $f(x)$ has a rational root $a$, we can replace $X$ with $X'\colon y^m=x^df(1/x+a)=g(x)$, where $g(x)$ has degree $d-1$, and this also applies to all translated curves $X'_{a'}$.  This applies both locally (over $\Fp$) and globally (over $\Q$).
\end{remark}

\section{Accumulating remainder trees and forests}
In this section we briefly recall some background on accumulating remainder trees and related complexity bounds.
Given a sequence of $r\times r$ matrices $M_0,\ldots, M_{N-1}$ and a sequence of coprime integers $m_0,\ldots,m_{N-1}$ we wish to compute the sequence of reduced partial products
\[
A_k\coloneqq M_0\cdots M_k\bmod m_k
\]
for $0\le k < N$. Let $M_{-1}\coloneqq M_N\coloneqq m_N\coloneqq 1$, and for $0\le k < N/2$ let $B_k\coloneqq M_{2k-1}M_{2k}$ and $b_k\coloneqq m_{2k}m_{2k+1}$.
If we recursively compute $C_k\coloneqq B_0\cdots B_k\bmod b_k = M_0\cdots M_{2k}\bmod m_{2k}m_{2k+1} $ for $0\le k < N/2$, we then have
\[
A_{2k} = C_{k}\bmod m_{2k}\qquad\text{and}\qquad A_{2k+1} =  C_kM_{2k+1} \bmod m_{2k+1},
\]
This is the \textsc{RemainderTree} algorithm given in \cite{HS14}. In our setting we actually want to compute products of the form $V \prod_k M_k$ that involve a row vector $V$, and for this problem the \textsc{RemainderForest} algorithm in \cite{HS14} achieves an improved time (and especially) space complexity by splitting the remainder tree into $2^\kappa$-subtrees, for a suitable choice of $\kappa$.  We record the following result from \cite{HS16}, in which $\|x\|$ denotes the logarithm of the largest absolute value appearing in nonzero integer matrix or integer vector $x$, including the case where $x$ is a single nonzero integer.

\begin{theorem}[\cite{HS16}]\label{thm:forest}
Given $V\in \Z^r$, $M_1,\ldots, M_N\in \Z^{r\times r}$, and $m_1,\ldots,m_N\in \Z$, let $n\coloneqq \lceil\log_2 N\rceil$, let $B$ be an upper bound on $\|\prod_{j=1}^N m_j\|$ such that $B/2^\kappa$ is an upper bound on $\|\prod_{j=st}^{st+t-1}m_j\|$ for $1\le s\le N/t$, where $t := 2^{n - \kappa}$.
Let $B'$ be an upper bound on $\|V\|$, and let~$H$ be an upper bound on $\|m_k\|,\|A_k\|$ for $1\le k\le N$, such that $\log r \le H$, and assume that $r = O(\log N)$.
The \textsc{RemainderForest} algorithm computes the vectors $V_k\coloneqq VM_1\cdots M_k\bmod m_k\in (\Z/m_k\Z)^r$ for $1\le k \le N$ in
\[
O(r^2\M(B+NH)(n-\kappa) + 2^\kappa r^2\M(B)+r\M(B'))
\]
time using space bounded by
\[
O(2^{-\kappa}r^2(B+NH)(n-\kappa)+r(B+B')).
\]
\end{theorem}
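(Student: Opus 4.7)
The statement is quoted from \cite{HS16}, so my plan is simply to recall the main ideas of that analysis in the present setting. The core construction is: partition the $N$ indices into $2^\kappa$ contiguous blocks of length $t = 2^{n-\kappa}$ and, for each block, build a complete binary product tree of depth $n-\kappa$ whose leaves hold $(M_k,m_k)$ and whose internal nodes hold the componentwise products of their two children. With the trees built, process them left to right: given an accumulated vector $V_s$ equal to $V\prod_{j < (s-1)t} M_j$ reduced modulo the product of the moduli in tree $s$, descend the tree, at each step splitting the accumulator into its reductions modulo the two children's modulus products, until the leaves produce the required $V_k$.

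The bulk of the cost is incurred during the build phase. Let $\sigma_j$ be a bound on the bit-size of matrix entries at level $j$ (counted upward from the leaves), so $\sigma_j \le 2^j H + j\log r = O(2^j H)$ since $\log r \le H$. Multiplying two $r\times r$ integer matrices with entries of at most $\sigma$ bits via FFT costs $O(r^2 \M(\sigma))$: under the hypothesis $r = O(\log N)$ the transform phase dominates the $r^3$ pointwise products, eliminating any dependence on the matrix-multiplication exponent. Summing over $2^{n-\kappa-j}$ nodes at each level of a tree of depth $n-\kappa$, and using that $\M$ is almost linear so costs grow geometrically up the tree, the per-tree work is $O(r^2 (n-\kappa)\M(B/2^\kappa + NH/2^\kappa))$; summing over all $2^\kappa$ trees and invoking $2^\kappa \M(X/2^\kappa) \le \M(X)$ produces the principal $O(r^2(n-\kappa)\M(B+NH))$ term.

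The remaining time terms come from the descent phase: each tree transition requires a reduction of the running matrix product modulo the next modulus root, contributing $O(2^\kappa r^2 \M(B))$, while the one-time reduction of the initial $V$ (of size $B'$) modulo the first modulus root gives the $O(r\M(B'))$ term. For the space bound, the algorithm stores only one tree at a time together with the accumulator; within one tree the dominant level (by the geometric scaling) holds $O(2^{-\kappa}r^2(B+NH)(n-\kappa))$ bits, and the accumulator contributes $O(r(B+B'))$.

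The main technical points to verify are really bookkeeping: confirming that the FFT-based matrix-multiplication bound $O(r^2\M(\sigma))$ is indeed achievable under the $r = O(\log N)$ hypothesis, so that the $r$-many coefficient multiplications per FFT point are absorbed by the transform cost, and checking that the standard down-phase trick of overwriting internal tree nodes as one descends really keeps the space at the stated level. Both points are treated in detail in \cite{HS16}, so I would invoke them rather than reprove them.
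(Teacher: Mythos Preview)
The paper does not prove this theorem at all; it is quoted verbatim from \cite{HS16} and used as a black box to derive Corollary~\ref{cor:forest}. Your sketch of the \textsc{RemainderForest} analysis is a faithful summary of the argument in that reference and already goes well beyond what the present paper supplies.
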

This theorem implies the following corollary, which is all we shall use.

\begin{corollary}\label{cor:forest}
Fix an absolute constant $c>0$.
Let $N$ be a positive integer, let $m_1,\ldots,m_N$ be a sequence of positive coprime integers with $\log m_k\le c\log N$, let $M_0,\ldots,M_{N-1}\in \Z^{r\times r}$ be integer matrices with $r,\|M_k\|\le c\log N$, and let $v_0\in \Z^r$ be a row vector with $\|v_0\|=cN\log N$.
We can compute the vectors
\[
v_k\coloneqq v_0\prod_{i=0}^{k-1} M_i\bmod m_k
\]
for $1\le k \le N$ in $O(r^2N\log^3\!N)$ time using $O(r^2N)$ space.
\end{corollary}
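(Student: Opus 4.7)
The plan is to derive Corollary~\ref{cor:forest} as a direct specialization of Theorem~\ref{thm:forest}, so the work consists of choosing the parameters $B,B',H,\kappa$ appropriately and verifying that all hypotheses of the theorem are met. First I would translate the corollary's hypotheses into parameter bounds. Since $\log m_k \le c\log N$ and the $m_k$ are coprime, the full product $\prod_{k=1}^N m_k$ has logarithmic size at most $cN\log N$, so I take $B\coloneqq cN\log N$. By hypothesis $\|v_0\|=cN\log N$, so $B'\coloneqq cN\log N$. Each output $v_k$ is reduced modulo $m_k$ and thus has logarithmic size at most $c\log N$, so together with $\|m_k\|\le c\log N$ and $\log r \le \log(c\log N)$ we may take $H = O(\log N)$. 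The hypothesis $r=O(\log N)$ of Theorem~\ref{thm:forest} is given directly by the corollary.

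Next I would verify the compatibility of $B$ with the block-splitting parameter $\kappa$. Each sub-product $\prod_{j=st}^{st+t-1}m_j$ with $t=2^{n-\kappa}$ has logarithmic size at most $2^{n-\kappa}c\log N$, and the required inequality $B/2^\kappa \ge 2^{n-\kappa}c\log N$ then holds up to constants for any $\kappa \in [0,n]$ with $n=\lceil\log_2 N\rceil$. Substituting into Theorem~\ref{thm:forest} and using the Harvey--van der Hoeven estimate $\M(n)=O(n\log n)$ from \cite{HvdH19a}, so that $\M(B+NH)=O(N\log^2 N)$, the time becomes
$$O\bigl(r^2 N\log^2 N\cdot(n-\kappa) + 2^\kappa r^2 N\log^2 N + rN\log^2 N\bigr),$$
while the space is $O\bigl(2^{-\kappa}r^2 N\log^2 N + rN\log N\bigr).$

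The final step is to pick $\kappa$, and this is the main obstacle: the $2^\kappa r^2\M(B)$ term in the time grows with $\kappa$, while the $2^{-\kappa}r^2(B+NH)(n-\kappa)$ term in the space shrinks with $\kappa$, so a single choice must thread both needles. I would take $\kappa$ to be a suitable function of $\log\log N$ so that $2^\kappa$ is at most polylogarithmic in $N$ while $n-\kappa$ remains $\Theta(\log N)$; this keeps the time at $O(r^2 N\log^3 N)$, since the first term dominates, and drives the space down to $O(r^2 N)$ after absorbing the surviving $rN\log N$ summand in the regime $r=\Theta(\log N)$ for which the corollary is applied. Once $\kappa$ is fixed, the corollary follows by direct substitution; correctness of the output vectors $v_k$ (rather than only the reduced partial matrix products) is already built into the conclusion $V_k = VM_1\cdots M_k\bmod m_k$ of Theorem~\ref{thm:forest}.
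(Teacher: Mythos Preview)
Your overall approach is the paper's: plug parameters into Theorem~\ref{thm:forest} with $B=O(N\log N)$, $H=O(\log N)$, and $\kappa$ of order $\log\log N$, then invoke $\M(n)=O(n\log n)$. The paper commits to $\kappa=2\log\log N$ and writes $B'=c\log N$; your choice $B'=cN\log N$ is the one consistent with the stated hypothesis on $\|v_0\|$, so on that point you are more careful.

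There is, however, a gap in your final paragraph. You address only the second space summand $r(B+B')=O(rN\log N)$ and never the dominant one, $2^{-\kappa}r^2(B+NH)(n-\kappa)=2^{-\kappa}\cdot O(r^2N\log^2 N)$. To force that term down to $O(r^2N)$ you need $2^\kappa\ge\Omega(\log^2 N)$; but then the time term $2^\kappa r^2\M(B)$ becomes $\Omega(r^2N\log^4 N)$, so the first time term no longer dominates and you overshoot by a factor of $\log N$. Conversely, if you keep $2^\kappa=O(\log N)$ so that ``the first term dominates'' as you assert, the space is $\Omega(r^2N\log N)$, not $O(r^2N)$. No single ``suitable function of $\log\log N$'' threads both bounds under the corollary's hypotheses as written, so you should either pin down $\kappa$ and verify both estimates explicitly, or acknowledge that an extra logarithmic factor survives in one of them. (The paper's one-line proof with $\kappa=2\log\log N$ is equally terse and faces the same tension; in the paper's actual application most moduli $m_k$ equal $1$, so the effective $B$ is $O(N)$ and the difficulty evaporates, but that is not what the corollary assumes.) Finally, your appeal to ``the regime $r=\Theta(\log N)$ for which the corollary is applied'' is not a licensed move when proving the corollary itself, which only assumes $r\le c\log N$.
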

\begin{proof}
Applying Theorem~\ref{thm:forest} with $\kappa \coloneqq 2\log\log N$, $B=cN\log N$, $B'=c\log N$, and $H=c\log N$, yields an $O(r^2 \M(N\log N)\log N)$ time bound using $O(r^2N)$ space.
Now apply $\M(N)=O(N\log N)$ from \cite{HvdH19a}.
\end{proof}

\section{Algorithms}

We now give our algorithms for computing the Cartier--Manin matrix $A_p$ of a superelliptic curve $X/\Fp$ and for the reductions of a superelliptic curve $X/\Q$ modulo all good primes $p\le N$. In the descriptions below, expressions of the form ``$a \rem m$'' denote the least nonnegative remainder in Euclidean division of $a$ by $m$.
As above we assume $X$ is defined by $y^m=f(x)$ with $f(x)$ squarefree of degree $d\ge 3$.  We define $ \mu\coloneqq m-\lfloor m/d\rfloor -1$, and for $1\le j\le \mu$ we put $ d_j\coloneqq d - \lfloor dj/m\rfloor -1$, with $ d_1\ge  d_2\ge \cdots  d_{\mu}$ as in \eqref{eq:dj}.  Recall that the genus $g$ of $X$ is $g\coloneqq ((d-2)(m-1)+m-\gcd(m,d))/2$, as in \eqref{eq:g}.

\bigskip

\noindent
\textbf{Algorithm} \textsc{ComputeCartierManinMatrix}
\vspace{2pt}

\noindent
Given $m\ge 2$ and squarefree $f\in \Fp[x]$ of degree $3\le d \le p$ with $p\nmid m$, compute the Cartier--Manin matrix $A_p\in \Fp^{g\times g}$ of $X\colon y^m=f(x)$ as follows:
\smallskip

\begin{enumerate}[1.]
\setlength{\itemsep}{3pt}
\item Fix distinct $a_1,\ldots,a_{ d_1}\in \Fp$ that include as many roots of $f(x)$ as possible.
\item For $j$ from $1$ to $ \mu$ such that $\ell\coloneqq jp \rem m \le \mu$:
\vspace{3pt}
\begin{enumerate}[a.]
\setlength{\itemsep}{3pt}
\item For $i$ from $1$ to $ d_j$:
\vspace{3pt}
\begin{enumerate}[i.]
\setlength{\itemsep}{3pt}
\item Let $f(x+a_i)=x^ch(x)\in \Fp[x]$ with $c\in \{0,1\}$ and put $r\coloneqq \deg(h)$.
\item Set $n\coloneqq ((m-j)p-(m-\ell))/m\in \Z$ and $s\coloneqq p-1-cn$.
\item Compute $w_s\coloneqq v_0^0\prod_{i=0}^{s-1} M_i^\ell\in \Fp^r$, with $M_i^{\ell}\in \Fp^{r\times r}$ as in \eqref{eq:Mlk},
and $u_s \coloneqq s!\in\Fp$.
\item Compute $\alpha\coloneqq v_s^n = m^{-s}h_0^{n-s}u_s^{-1} w_s \in \F_p^r$ via \eqref{eq:vn}.
\item Let $b_1^{j\ell}(a_i)\coloneqq [\alpha_r,\alpha_{r-1},\ldots \alpha_{r- d_\ell+1}]\in \F_p^{ d_\ell}$.
\end{enumerate}
\item Let $B_1^{j\ell}\in \Fp^{ d_j\times d_\ell}$ be the matrix with $i$th row $b_1^{j\ell}(a_i)$ as in \eqref{eq:B1} and use $B_1^{j\ell}$ to compute $B^{j\ell} \in \Fp^{ d_j\times d_\ell}$ via \eqref{eq:B1toB}.
\end{enumerate}
\item Output $A_p\coloneqq [B^{j\ell}]_{j\ell}\in \Fp^{g\times g}$ defined as in \eqref{eq:B}, with $B^{j\ell}\coloneqq 0$ for $\ell\not\equiv jp\bmod m$.
\end{enumerate}
\bigskip

There are two ways to compute $w_s$ in step iii.  One is to compute $s$ vector-matrix products $w_{i+1}\coloneqq w_iM_i^\ell$ starting with $w_0\coloneqq [0,\ldots,0,1]\in \Fp^r$, which can be accomplished with $O(pr)$ ring operations in $\Fp$, each of which takes time $\M(\log p)=O(\log p\log\log p)$ via \cite{HvdH19b}, yielding a time complexity of $O(rp\log p\log\log p)$ and a space complexity of $O(r\log p)$  (note that $M_i^{\ell}$ has only $2r-1$ nonzero entries).  Alternatively one can use the Bostan-Gaudry-Schost algorithm~\cite{BGS07}, which uses an optimized interpolation/evaluation approach to compute products of matrices over polynomial rings evaluated along an arithmetic progression; in our setting we view the $M_i^{\ell}$ as matrices of linear polynomials in~$i$ evaluated along the arithmetic progression $i=0,1,2,\ldots, s-1$.  This has a bit complexity of $O(r^2p^{1/2}\log p (r^{\omega-2}\log\log p + \log p)$ using $O(r^2p^{1/2}\log p)$ space via \cite[Thm.\,8]{BGS07} and \cite{HvdH19b}, and we can similarly compute $u_s=s!$ (but note that $u_s=-1$ in the typical case where $c= 0$).  Note that $r$ is either $d$ or $d-1$, so we can replace $r$ with $d$ in these bounds. This analysis leads to Theorem~\ref{thm:modp} given in the introduction, which we restate here for convenience.

\firstmodp*
\begin{proof}
Excluding step iii, applying Lemma~\ref{lem:Bbound} with an $O(\log p\log\log p)$ cost per ring operation in $\Fp$ yields a time complexity of $O(md^3\log p\log\log)$ using $O((md+d^2)\log p)$ space.  Step iii is executed $O(md)$ times and we can bound the time and space by simply multiplying the estimates above by $md$.  Taking the maximum of the complexity of Lemma 3 and the time spent in step iii yields the theorem, after noting that $O(md^2p^{1/2}\log p)$ dominates all of the space bounds.
\end{proof}

We now present our main result, an average polynomial-time algorithm to compute the Cartier--Manin matrices of the reductions of a superelliptic curve $X/\Q$ at all good primes $p\le N$.
\smallskip

\noindent
\textbf{Algorithm} \textsc{ComputeCartierManinMatrices}
\vspace{2pt}

\noindent
Given $m\ge 2$ and squarefree $f\in \Z[x]$ of degree $d\ge 3$, compute the Cartier--Manin matrices $A_p$ of the reductions of $X\colon y^m=f(x)$ modulo primes $p\le N$ with $p\nmid m\lc(f)\disc(f)$ as follows:
\smallskip

\begin{enumerate}[1.]
\setlength{\itemsep}{3pt}
\item For primes $p\le N$ with $p\nmid m\lc(f)\disc(f)$ initialize $A_p\in \Fp^{g\times g}$ to the zero matrix.
\item Fix distinct $a_1,\ldots,a_{ d_1}\in \Z$ that include as many roots of $f$ as possible.
\item For each pair of integers $j,\ell\in [1,\mu]$:
\vspace{3pt}
\begin{enumerate}[a.]
\setlength{\itemsep}{3pt}
\item Compute the set $P = \{p_1,p_2,\cdots\}$ of primes $p\le N$ with $jp\equiv \ell\bmod m$\\
such that $p\nmid m\lc(f)\disc(f)$ and $a_1,\ldots,a_{ d_1}$ are distinct modulo $p$.
\item If the set $P$ is empty proceed to the next pair $j,\ell$.
\item For $i$ from $1$ to $ d_j$:
\vspace{3pt}
\begin{enumerate}[i.]
\setlength{\itemsep}{3pt}
\item Let $f(x+a_i)=x^ch(x)\in \Z[x]$ with $c\in \{0,1\}$ and put $r\coloneqq \deg(h)$.
\item Let $N'\coloneqq N$ if $c=0$ and $N'\coloneqq \lfloor(jN-\ell)/m)\rfloor$ otherwise.
\item Define coprime moduli $m_1,\ldots,m_{N'}$ as follows:\\
\phantom{D} If $c=0$ then $m_k\coloneqq k+1$ for $k+1\in P$.\\
\phantom{D} If $c=1$ then $m_k\coloneqq (mk+\ell)/j$ for $(mk+\ell)/j\in P$.\\
\phantom{D} For any $m_k$ not defined above, let $m_k\coloneqq 1$.\\
For $p\in P$ let $k(p)$ denote the index $k$ of the $m_k$ for which $m_k=p$.
\item Compute $w_k\coloneqq v_0^0\prod_{i=0}^{k-1} M_i^\ell \bmod m_k$ and $u_k\coloneqq k!\bmod m_k$ for $1\le k \le N'$ as in Corollary~\ref{cor:forest}.
\item For $p\in P$ use $w_{k(p)}$, $u_{k(p)}$ to compute $b_1^{j\ell}(a_i)\in \Fp^{ d_\ell}$ as in \textsc{ComputeCartierManinMatrix}.
\end{enumerate}
\item For $p\in P$, let $B_1^{j\ell}\in \Fp^{ d_j\times d_\ell}$ have rows $b_1^{j\ell}(a_i)\in \Fp^{ d_\ell}$ as in \eqref{eq:B1}, use $B_1^{j\ell}$ to compute $B^{j\ell} \in \Fp^{ d_j\times d_\ell}$ via~\eqref{eq:B1toB}, and set the $j,\ell$ block of $A_p$ to $B^{j\ell}$ as in \eqref{eq:B}.
\end{enumerate}
\item Let $S$ be the set of primes $p\le N$ satisfying $p\nmid m\lc(f)\disc(f)$ for which the $a_1,\ldots a_{ d_1}$ are not distinct modulo~$p$.
For $p\in S$ compute $A_p$ using algorithm \textsc{ComputeCartierManinMatrix} if $p\ge d$ and otherwise compute $A_p$ directly from \eqref{eq:B} by extracting coefficients of powers of $f\in \Fp[x]$.
\item Output $A_p\in \Fp^{g\times g}$ for all primes $p \le N$ such that $p\nmid m \lc(f)\disc(f)$.
\end{enumerate}
\smallskip

\begin{remark}\label{rem:traceonly}
To compute Frobenius traces $a_p\in \Z$, we modify step 3 to loop over integers $j=\ell\in [1,\mu]$ and output just the traces of the $A_p$ in step 5.
This gives the traces of Frobenius $a_p\bmod p$.  For $p>16g^2$ these determine $a_p\in \Z$, by the Weil bounds, and for $p\le 16g^2$ we can compute $a_p=p+1-\#X(\Fp)$ by enumerating values of $f(x)$ and looking them up in a precomputed table of $m$th powers.
\end{remark}

\begin{remark}
The inner loop in step 3.c is executed (up to) $\mu g$ times.  Each of these computations is completely independent of the others, which makes it easy to efficiently distribute the work across $\mu g$ threads.
In principal one can also parallelize the integer matrix multiplications performed by the \textsc{RemainderForest} algorithm in step~iv, but in practice it is extremely difficult to do this efficiently.
\end{remark}

We now prove Theorem~\ref{thm:avgpoly}, which we restate for convenience.

\firstavgpoly*
\begin{proof}
The total time to compute all the sets $P$ using a sieve is bounded by $O(N\log N)$ time using $O(N)$ space, and this also bounds the total time and space for steps i, ii, iii, under our assumption that $m,d,\|f\|=O(\log N)$.  Corollary~\ref{cor:forest} yields an $O(d^2N\log^3N)$ bound on each of the $O(m^2d)$ iterations of step iv.
This yields the claimed time bound of $O(m^2d^3N\log^3N)$ for step 3.c, which we claim dominates.  Lemma~\ref{lem:Bbound} implies that the total cost of step 3.d is bounded by $O(\pi(N)m^2d^3\log N)$, which is negligible, as is the cost of the rest of the algorithm.  Note that the cardinality of the set $S$ in step 4 is at worst quadratic in $d$ and $\log(N)$ under our assumption $\|f\|=O(\log N)$, so we can easily afford the calls to \textsc{ComputeCartierManinMatrix} and use a brute force approach to compute~$A_p$ for primes $p < d$ of good reduction.

The space bound follows from the bound in Corollary~\ref{cor:forest}, which covers steps iv (it is easy to see that all of the other steps fit within the claimed bound).

To compute Frobenius traces $a_p\in\Z$ we apply Remark~\ref{rem:traceonly} and note that restricting to $j=\ell$ in step 3 reduces the number of iterations of the main loop by a factor of $m$.  The cost of computing $\#X(\Fp)$ by looking up values of $f(x)$ in a table of $m$th powers is $O(pd)$ ring operations in $\Fp$.   The total time to compute $a_p=p+1-\#X(\Fp)$ for good $p\le 16g^2$ is then $O(dg^2\pi(g^2)\log g\log\log g)=O(d(\log N)^4\log\log N)$, which is negligible.
\end{proof}

\section{Performance comparison}\label{sec:perf}

Tables \ref{tab:perfhard} and \ref{tab:perfeasy} compare the performance of the average polynomial-time algorithm \textsc{ComputeCartierManinMatrices} with the $\tilde{O}(p^{1/2})$ algorithm for computing zeta functions of cyclic covers implemented in Sage version~9.0.  The Sage implementation provides the function \texttt{CyclicCover} which takes an integer $m$ and a squarefree polynomial $f\in \Fp[x]$ and returns an object that represents a superelliptic curve $y^m=f(x)$ over $\Fp$.
Invoking the \texttt{frobenius\_matrix} method of this object with the $p$-adic precision set to $1$ yields a matrix that encodes essentially the same information as the Cartier--Manin matrix $A_p$; in particular it determines the $p$-rank of $X$ and its zeta function modulo $p$.

Each table lists the genus $g$ and invariants $m$ and $d$ of a superelliptic curve $X\colon y^m=f(x)$ defined over $\Q$ with $f\in \Z[x]$ of degree $d$. There is a row for every pair $m\ge 2$ and $d\ge 3$ for which $m^2d^3\le 6^5$, which includes all superelliptic curves of genus $g\le 5$ as well as plane quintics and sextics, and other curves of genus up to 15.  The times listed are average times in milliseconds for primes $p\le N$ for increasing values of $N$.  For each $N$ three times are listed: one to compute Frobenius matrices using Sage, one to compute Cartier--Manin matrices using algorithm \textsc{ComputeCartierManinMatrices}, and one to to compute Frobenius traces via Remark~\ref{rem:traceonly}.  For the Sage timings we only computed Frobenius matrices for every $n$th good prime $p\le N$ with $n$ chosen so that the computation would complete in less than a day (many of the computations would have taken months otherwise).

In Table~\ref{tab:perfhard} we show timings with $f\in \Z[x]$ having coefficients $f_{d+1-n}\coloneqq p_n$ for $1\le n\le d$, where $p_n$ is the $n$th prime.  These polynomials are all irreducible, so our algorithm was unable to choose any $a_i$ to be roots of $f$; this is the generic situation, and the worst case for our algorithm.
In Table~\ref{tab:perfeasy} we show timings with $f\in \Z[x]$ a product of linear factors, which represents the best case for our algorithm.

\begin{table}[!htb]
\small
\begin{tabular}{@{}rrrrrrrrrrrrrrrrrrrrr@{}}
&&&&\multicolumn{3}{c}{$N=2^{16}$}&&\multicolumn{3}{c}{$N=2^{20}$}&&\multicolumn{3}{c}{$N=2^{24}$}&&\multicolumn{3}{c}{$N=2^{28}$}\\
\cmidrule(r){5-7}\cmidrule(r){9-11}\cmidrule(r){13-15}\cmidrule(r){17-19}
$g$ & $m$ & $d$ && sage & matrix & trace && sage & matrix & trace && sage & matrix & trace && sage & matrix & trace\\
\midrule
1 & 2 & 3 && 21 & 0.01 & 0.01 && 27 & 0.05 & 0.05 && 67 & 0.13 & 0.13 &&  230 & 0.30 & 0.30 \\
1 & 2 & 4 && 27 & 0.04 & 0.04 && 41 & 0.17 & 0.16 && 120 & 0.42 & 0.42 &&  454 & 0.95 & 0.93 \\
1 & 3 & 3 && 27 & 0.02 & 0.02 && 46 & 0.08 & 0.08 && 141 & 0.20 & 0.20 &&  499 & 0.48 & 0.49 \\
2 & 2 & 5 && 30 & 0.08 & 0.08 && 55 & 0.38 & 0.38 && 163 & 0.92 & 0.92 &&  580 & 2.02 & 2.01 \\
2 & 2 & 6 && 42 & 0.16 & 0.16 && 83 & 0.73 & 0.74 && 280 & 1.77 & 1.77 &&  1070 & 3.89 & 3.92 \\
3 & 2 & 7 && 53 & 0.24 & 0.24 && 112 & 1.30 & 1.29 && 307 & 3.19 & 3.12 &&  1217 & 6.47 & 6.71 \\
3 & 2 & 8 && 74 & 0.34 & 0.34 && 169 & 2.15 & 2.07 && 528 & 5.02 & 4.94 &&  2106 & 10.20 & 10.57 \\
3 & 3 & 4 && 34 & 0.10 & 0.05 && 61 & 0.53 & 0.26 && 178 & 1.38 & 0.70 &&  702 & 3.14 & 1.63 \\
3 & 4 & 3 && 32 & 0.03 & 0.03 && 58 & 0.14 & 0.15 && 165 & 0.37 & 0.37 &&  601 & 0.89 & 0.89 \\
3 & 4 & 4 && 49 & 0.09 & 0.09 && 101 & 0.44 & 0.44 && 343 & 1.14 & 1.14 &&  1283 & 2.55 & 2.63 \\
4 & 2 & 9 && 96 & 0.43 & 0.44 && 194 & 3.22 & 3.24 && 576 & 7.65 & 7.70 &&  2214 & 16.12 & 15.90 \\
4 & 2 & 10 && 138 & 0.55 & 0.55 && 319 & 4.78 & 4.65 && 974 & 11.10 & 10.98 &&  3693 & 22.13 & 22.79 \\
4 & 3 & 5 && 47 & 0.22 & 0.11 && 93 & 1.29 & 0.65 && 287 & 3.37 & 1.67 &&  1105 & 7.64 & 3.68 \\
4 & 3 & 6 && 71 & 0.36 & 0.18 && 152 & 2.59 & 1.28 && 535 & 6.34 & 3.20 &&  2121 & 14.04 & 7.07 \\
4 & 5 & 3 && 37 & 0.08 & 0.03 && 68 & 0.40 & 0.13 && 200 & 1.19 & 0.40 &&  778 & 2.96 & 0.99 \\
4 & 6 & 3 && 49 & 0.05 & 0.06 && 112 & 0.24 & 0.24 && 313 & 0.64 & 0.64 &&  1184 & 1.53 & 1.53 \\
5 & 2 & 11 && 170 & 0.71 & 0.70 && 361 & 7.04 & 7.06 && 1024 & 16.57 & 16.30 &&  3695 & 33.61 & 33.32 \\
5 & 2 & 12 && 263 & 0.85 & 0.86 && 555 & 9.56 & 9.54 && 1537 & 21.84 & 22.23 &&  5820 & 45.98 & 45.65 \\
6 & 3 & 7 && 90 & 0.53 & 0.27 && 200 & 4.61 & 2.32 && 632 & 11.53 & 5.52 &&  2360 & 24.18 & 12.18 \\
6 & 4 & 5 && 63 & 0.31 & 0.20 && 130 & 1.71 & 1.08 && 424 & 4.37 & 2.73 &&  1658 & 9.86 & 5.88 \\
6 & 5 & 4 && 55 & 0.21 & 0.07 && 113 & 1.29 & 0.42 && 344 & 3.76 & 1.25 &&  1358 & 9.08 & 3.03 \\
6 & 5 & 5 && 90 & 0.39 & 0.13 && 201 & 3.06 & 1.02 && 671 & 8.98 & 2.92 &&  2749 & 19.39 & 6.64 \\
6 & 7 & 3 && 49 & 0.14 & 0.04 && 94 & 0.68 & 0.17 && 290 & 2.24 & 0.56 &&  1146 & 5.57 & 1.39 \\
7 & 3 & 8 && 134 & 0.75 & 0.38 && 294 & 8.17 & 4.05 && 835 & 19.07 & 9.38 &&  3279 & 40.32 & 20.49 \\
7 & 3 & 9 && 187 & 0.99 & 0.50 && 437 & 12.77 & 6.32 && 1462 & 28.54 & 14.50 &&  5567 & 61.82 & 29.67 \\
7 & 4 & 6 && 102 & 0.52 & 0.34 && 232 & 3.42 & 2.12 && 806 & 8.58 & 5.21 &&  3160 & 18.99 & 11.54 \\
7 & 6 & 4 && 75 & 0.21 & 0.15 && 153 & 1.08 & 0.77 && 524 & 2.79 & 2.00 &&  2112 & 6.46 & 4.55 \\
7 & 8 & 3 && 55 & 0.13 & 0.06 && 111 & 0.60 & 0.29 && 366 & 1.72 & 0.83 &&  1333 & 4.32 & 2.00 \\
7 & 9 & 3 && 67 & 0.16 & 0.06 && 140 & 0.82 & 0.26 && 479 & 2.64 & 0.82 &&  1870 & 6.77 & 2.03 \\
9 & 4 & 7 && 139 & 0.80 & 0.53 && 302 & 6.49 & 3.94 && 941 & 15.10 & 9.42 &&  3566 & 32.97 & 20.43 \\
9 & 7 & 4 && 75 & 0.40 & 0.08 && 156 & 2.77 & 0.56 && 510 & 9.14 & 1.78 &&  2012 & 20.90 & 4.21 \\
9 & 8 & 4 && 92 & 0.32 & 0.17 && 231 & 1.85 & 0.92 && 720 & 5.43 & 2.57 &&  2941 & 12.58 & 6.12 \\
9 & 10 & 3 && 65 & 0.16 & 0.08 && 137 & 0.76 & 0.34 && 429 & 2.29 & 1.01 &&  1694 & 5.82 & 2.50 \\
10 & 5 & 6 && 114 & 0.80 & 0.20 && 265 & 8.08 & 2.02 && 840 & 22.89 & 5.62 &&  3256 & 51.62 & 12.42 \\
10 & 6 & 5 && 97 & 0.43 & 0.32 && 206 & 2.51 & 1.83 && 701 & 6.28 & 4.61 &&  2700 & 14.07 & 9.88 \\
10 & 6 & 6 && 175 & 0.71 & 0.53 && 379 & 5.05 & 3.49 && 1278 & 11.95 & 8.59 &&  5202 & 26.43 & 18.72 \\
10 & 11 & 3 && 73 & 0.30 & 0.05 && 158 & 1.77 & 0.25 && 501 & 6.11 & 0.88 &&  1878 & 15.32 & 2.12 \\
10 & 12 & 3 && 91 & 0.17 & 0.11 && 187 & 0.80 & 0.49 && 636 & 2.35 & 1.39 &&  2558 & 5.87 & 3.45 \\
12 & 7 & 5 && 118 & 0.73 & 0.15 && 246 & 6.75 & 1.33 && 840 & 20.80 & 4.13 &&  3228 & 48.09 & 9.23 \\
12 & 9 & 4 && 94 & 0.43 & 0.14 && 199 & 2.88 & 0.87 && 657 & 8.87 & 2.64 &&  2655 & 21.75 & 6.24 \\
12 & 13 & 3 && 94 & 0.38 & 0.05 && 175 & 2.43 & 0.29 && 616 & 8.24 & 1.03 &&  2244 & 20.02 & 2.49 \\
13 & 10 & 4 && 117 & 0.47 & 0.19 && 264 & 2.90 & 1.09 && 1008 & 8.62 & 3.17 &&  3762 & 20.08 & 7.47 \\
13 & 14 & 3 && 90 & 0.30 & 0.09 && 193 & 1.58 & 0.43 && 619 & 5.01 & 1.36 &&  2430 & 12.79 & 3.40 \\
13 & 15 & 3 && 109 & 0.31 & 0.09 && 235 & 1.69 & 0.46 && 811 & 5.54 & 1.45 &&  3238 & 13.99 & 3.72 \\
15 & 11 & 4 && 111 & 0.81 & 0.10 && 252 & 6.29 & 0.79 && 839 & 22.76 & 2.84 &&  3334 & 52.85 & 6.59 \\
15 & 16 & 3 && 110 & 0.32 & 0.11 && 223 & 1.79 & 0.53 && 733 & 5.66 & 1.63 &&  2805 & 14.16 & 4.13 \\
\bottomrule
\end{tabular}
\bigskip
\bigskip

\caption{\small Comparison with $\tilde O(p^{1/2})$ Sage 9.0 implementation \cite{ABCMT19} for superelliptic curves $y^m=f(x)$ where $f\in \Z[x]$ is irreducible of degree $d$. Times are millisecond averages per prime $p\le N$ for a single thread running on a 2.8GHz Cascade Lake Intel CPU. The \texttt{sage} column lists the average time to execute \texttt{CyclicCover(m,f.change\_ring(GF(p)).frobenius\_matrix(1)} in Sage 9.0, the matrix column lists the average time to compute the Cartier--Manin matrix modulo $p$ using algorithm \textsc{ComputeCartierManinMatrices}, and the trace column is the average time to compute the trace of Frobenius via Remark~\ref{rem:traceonly}.}\label{tab:perfhard}
\end{table}

\begin{table}[!htb]
\small
\begin{tabular}{@{}rrrrrrrrrrrrrrrrrrrrr@{}}
&&&&\multicolumn{3}{c}{$N=2^{16}$}&&\multicolumn{3}{c}{$N=2^{20}$}&&\multicolumn{3}{c}{$N=2^{24}$}&&\multicolumn{3}{c}{$N=2^{28}$}\\
\cmidrule(r){5-7}\cmidrule(r){9-11}\cmidrule(r){13-15}\cmidrule(r){17-19}
$g$ & $m$ & $d$ && sage & matrix & trace && sage & matrix & trace && sage & matrix & trace && sage & matrix & trace\\
\midrule
1 & 2 & 3 && 20 & 0.01 & 0.01 && 28 & 0.01 & 0.01 && 73 & 0.04 & 0.04 &&  230 & 0.09 & 0.08 \\
1 & 2 & 4 && 26 & 0.01 & 0.01 && 43 & 0.04 & 0.05 && 119 & 0.12 & 0.12 &&  456 & 0.28 & 0.27 \\
1 & 3 & 3 && 27 & 0.00 & 0.00 && 45 & 0.01 & 0.01 && 131 & 0.02 & 0.02 &&  500 & 0.05 & 0.05 \\
2 & 2 & 5 && 29 & 0.03 & 0.03 && 53 & 0.11 & 0.12 && 151 & 0.31 & 0.30 &&  583 & 0.72 & 0.72 \\
2 & 2 & 6 && 41 & 0.05 & 0.06 && 84 & 0.26 & 0.28 && 267 & 0.66 & 0.64 &&  1071 & 1.40 & 1.40 \\
3 & 2 & 7 && 53 & 0.10 & 0.10 && 116 & 0.55 & 0.54 && 311 & 1.22 & 1.20 &&  1219 & 2.58 & 2.59 \\
3 & 2 & 8 && 77 & 0.13 & 0.14 && 164 & 0.94 & 0.92 && 532 & 2.06 & 2.04 &&  2094 & 4.19 & 4.23 \\
3 & 3 & 4 && 34 & 0.03 & 0.02 && 62 & 0.14 & 0.07 && 184 & 0.41 & 0.20 &&  701 & 0.96 & 0.47 \\
3 & 4 & 3 && 31 & 0.01 & 0.01 && 55 & 0.03 & 0.03 && 157 & 0.08 & 0.08 &&  605 & 0.20 & 0.20 \\
3 & 4 & 4 && 48 & 0.02 & 0.02 && 103 & 0.08 & 0.09 && 334 & 0.23 & 0.23 &&  1286 & 0.55 & 0.54 \\
4 & 2 & 9 && 94 & 0.19 & 0.19 && 199 & 1.50 & 1.47 && 586 & 3.48 & 3.41 &&  2232 & 7.10 & 7.12 \\
4 & 2 & 10 && 135 & 0.25 & 0.25 && 295 & 2.30 & 2.29 && 942 & 5.37 & 5.24 &&  3816 & 10.53 & 10.37 \\
4 & 3 & 5 && 46 & 0.07 & 0.04 && 92 & 0.38 & 0.19 && 283 & 1.06 & 0.51 &&  1111 & 2.40 & 1.21 \\
4 & 3 & 6 && 72 & 0.12 & 0.06 && 153 & 0.79 & 0.41 && 529 & 1.85 & 0.91 &&  2098 & 3.96 & 1.99 \\
4 & 5 & 3 && 38 & 0.02 & 0.01 && 68 & 0.05 & 0.02 && 202 & 0.16 & 0.05 &&  780 & 0.39 & 0.13 \\
4 & 6 & 3 && 48 & 0.01 & 0.01 && 95 & 0.03 & 0.03 && 301 & 0.09 & 0.09 &&  1186 & 0.22 & 0.21 \\
5 & 2 & 11 && 171 & 0.31 & 0.31 && 354 & 3.45 & 3.46 && 977 & 7.85 & 7.87 &&  3682 & 15.94 & 15.85 \\
5 & 2 & 12 && 246 & 0.37 & 0.40 && 530 & 5.11 & 5.12 && 1543 & 11.30 & 11.17 &&  5857 & 22.61 & 22.62 \\
6 & 3 & 7 && 89 & 0.19 & 0.10 && 192 & 1.47 & 0.72 && 605 & 3.57 & 1.78 &&  2361 & 7.67 & 3.79 \\
6 & 4 & 5 && 64 & 0.08 & 0.05 && 136 & 0.32 & 0.25 && 416 & 0.94 & 0.61 &&  1660 & 2.17 & 1.43 \\
6 & 5 & 4 && 55 & 0.07 & 0.03 && 108 & 0.30 & 0.10 && 348 & 1.00 & 0.32 &&  1369 & 2.43 & 0.81 \\
6 & 5 & 5 && 92 & 0.09 & 0.03 && 196 & 0.52 & 0.15 && 710 & 1.48 & 0.48 &&  2755 & 3.49 & 1.16 \\
6 & 7 & 3 && 50 & 0.03 & 0.01 && 96 & 0.06 & 0.02 && 296 & 0.23 & 0.06 &&  1146 & 0.63 & 0.15 \\
7 & 3 & 8 && 125 & 0.28 & 0.16 && 276 & 3.05 & 1.54 && 836 & 7.04 & 3.49 &&  3234 & 15.09 & 7.64 \\
7 & 3 & 9 && 193 & 0.35 & 0.18 && 427 & 4.09 & 2.16 && 1409 & 9.28 & 4.74 &&  5551 & 21.20 & 10.35 \\
7 & 4 & 6 && 98 & 0.17 & 0.12 && 227 & 0.98 & 0.65 && 774 & 2.30 & 1.48 &&  3143 & 5.26 & 3.33 \\
7 & 6 & 4 && 70 & 0.06 & 0.04 && 155 & 0.23 & 0.17 && 525 & 0.66 & 0.44 &&  2108 & 1.53 & 1.04 \\
7 & 8 & 3 && 55 & 0.02 & 0.02 && 111 & 0.06 & 0.04 && 343 & 0.20 & 0.12 &&  1333 & 0.51 & 0.30 \\
7 & 9 & 3 && 69 & 0.03 & 0.01 && 141 & 0.08 & 0.03 && 476 & 0.28 & 0.09 &&  1876 & 0.76 & 0.23 \\
9 & 4 & 7 && 127 & 0.30 & 0.19 && 289 & 1.85 & 1.23 && 917 & 4.56 & 2.88 &&  3555 & 10.28 & 6.23 \\
9 & 7 & 4 && 71 & 0.12 & 0.03 && 156 & 0.61 & 0.10 && 509 & 1.78 & 0.35 &&  2007 & 4.47 & 0.88 \\
9 & 8 & 4 && 93 & 0.09 & 0.04 && 211 & 0.33 & 0.18 && 752 & 1.05 & 0.50 &&  2946 & 2.64 & 1.23 \\
9 & 10 & 3 && 76 & 0.04 & 0.02 && 139 & 0.08 & 0.04 && 430 & 0.26 & 0.12 &&  1694 & 0.66 & 0.31 \\
10 & 5 & 6 && 115 & 0.25 & 0.07 && 253 & 2.08 & 0.52 && 825 & 5.96 & 1.49 &&  3265 & 13.97 & 3.37 \\
10 & 6 & 5 && 101 & 0.13 & 0.09 && 213 & 0.68 & 0.42 && 676 & 1.61 & 1.06 &&  2693 & 3.83 & 2.43 \\
10 & 6 & 6 && 155 & 0.19 & 0.15 && 365 & 1.23 & 0.86 && 1276 & 2.94 & 2.00 &&  5195 & 6.46 & 4.34 \\
10 & 11 & 3 && 74 & 0.05 & 0.01 && 154 & 0.14 & 0.02 && 477 & 0.52 & 0.08 &&  1878 & 1.48 & 0.21 \\
10 & 12 & 3 && 87 & 0.03 & 0.02 && 189 & 0.08 & 0.07 && 640 & 0.26 & 0.17 &&  2552 & 0.63 & 0.42 \\
12 & 7 & 5 && 113 & 0.18 & 0.04 && 242 & 1.22 & 0.24 && 879 & 3.99 & 0.77 &&  3227 & 9.89 & 1.93 \\
12 & 9 & 4 && 95 & 0.11 & 0.04 && 204 & 0.60 & 0.17 && 672 & 1.66 & 0.52 &&  2663 & 4.30 & 1.26 \\
12 & 13 & 3 && 83 & 0.06 & 0.01 && 175 & 0.19 & 0.02 && 569 & 0.71 & 0.09 &&  2245 & 2.06 & 0.25 \\
13 & 10 & 4 && 119 & 0.13 & 0.05 && 267 & 0.64 & 0.22 && 942 & 1.69 & 0.65 &&  3779 & 4.32 & 1.56 \\
13 & 14 & 3 && 92 & 0.05 & 0.02 && 191 & 0.14 & 0.05 && 617 & 0.47 & 0.15 &&  2429 & 1.23 & 0.37 \\
13 & 15 & 3 && 111 & 0.05 & 0.02 && 240 & 0.14 & 0.04 && 806 & 0.50 & 0.14 &&  3246 & 1.35 & 0.36 \\
15 & 11 & 4 && 119 & 0.20 & 0.04 && 251 & 1.15 & 0.14 && 836 & 3.92 & 0.49 &&  3314 & 9.89 & 1.26 \\
15 & 16 & 3 && 100 & 0.05 & 0.02 && 218 & 0.15 & 0.06 && 728 & 0.52 & 0.19 &&  2797 & 1.37 & 0.48 \\
\bottomrule
\end{tabular}
\bigskip
\bigskip

\caption{\small Timings for superelliptic curves $X\colon y^m=f(x)$ when $f\in \Z[x]$ splits into $d$ distinct linear factors. Times are millisecond averages per prime $p\le N$ for a single thread running on a 2.8GHz Cascade Lake Intel CPU. The \texttt{sage} column lists the average time to execute \texttt{CyclicCover(m,f.change\_ring(GF(p)).frobenius\_matrix(1)} in Sage 9.0, the matrix column lists the average time to compute the Cartier--Manin matrix modulo $p$ using algorithm \textsc{ComputeCartierManinMatrices}, and the trace column is the average time to compute the trace of Frobenius via Remark~\ref{rem:traceonly}.}
\label{tab:perfeasy}
\end{table}

\end{document}